%%
%% This is LaTeX2e input.
%%

%% The following tells LaTeX that we are using the 
%% style file amsart.cls (That is the AMS article style
%%
% Packages: AMS
\documentclass{amsart}
 \usepackage[foot]{amsaddr}
\usepackage[utf8]{inputenc}
\usepackage[]{fontenc}
\usepackage[dvips]{epsfig}
\usepackage{hyperref}
\usepackage{amssymb}
\usepackage{amsthm}
\usepackage{graphicx}
\usepackage{import}
\usepackage{amsmath}
\usepackage{amscd}
\usepackage{mathtools}
\usepackage{verbatim}
\usepackage{tikz-cd}
\usepackage{color}
\usepackage{amsmath,tikz-cd}
\usepackage[all]{xy}
\usepackage[T1]{fontenc}

\usepackage{todonotes}

\allowdisplaybreaks

%% This has a default type size 10pt.  Other options are 11pt and 12pt
%% This are set by replacing the command above by
%% \documentclass[11pt]{amsart}
%%
%% o
%%
%% \documentclass[12pt]{amsart}
%%

%%
%% Some mathematical symbols are not included in the basic LaTeX
%% package.  Uncommenting the following makes more commands
%% available. 
%%

%\usepackage{amssymb}

%%
%% The following is commands are used for importing various types of
%% grapics.
%% 

%\usepackage{epsfig}  		% For postscript
%\usepackage{epic,eepic}       % For epic and eepic output from xfig

%%
%% The following is very useful in keeping track of labels while
%% writing.  The variant   \usepackage[notcite]{showkeys}
%% does not show the labels on the \cite commands.
%% 

%\usepackageshowkeys}

%%%%
%%%% The next few commands set up the theorem type environments.
%%%% Here they are set up to be numbered section.number, but this can
%%%% be changed.
%%%%

\newtheorem{thm}{Theorem}[section]
\newtheorem{prop}[thm]{Proposition}

\newtheorem{cor}[thm]{Corollary}

%%
%% If some other type is need, say conjectures, then it is constructed
%% by editing and uncommenting the following.
%%

%\newtheorem{conj}[thm]{Conjecture} 

%%% 
%%% The following gives definition type environments (which only differ
%%% from theorem type invironmants in the choices of fonts).  The
%%% numbering is still tied to the theorem counter.
%%% 

\theoremstyle{definition}
\newtheorem{definition}[thm]{Definition}

%%
%% Again more of these can be added by uncommenting and editing the
%% following. 
%%

%\newtheorem{note}[thm]{Note}

%%% 
%%% The following gives remark type environments (which only differ
%%% from theorem type invironmants in the choices of fonts).  The
%%% numbering is still tied to the theorem counter.
%%% 

\theoremstyle{remark}
\newtheorem{remark}[thm]{Remark}

%%%
%%% The following, if uncommented, numbers equations within sections.
%%% 

\numberwithin{equation}{section}

%%%
%%% The following show how to make definition (also called macros or
%%% abbreviations).  For example to use get a bold face R for use to
%%% name the real numbers the command is \mathbf{R}.  To save typing we
%%% can abbreviate as
% Symbols
\renewcommand{\P}{\mathbb{P}}
\newcommand{\C}{\mathcal{C}}
\newcommand{\D}{\mathcal{D}}
\newcommand{\F}{\mathbb{F}}
\renewcommand{\H}{\mathcal{H}}
\newcommand{\Z}{\mathbb{Z}}
\newcommand{\Q}{\mathbb{Q}}

%%
%% The comment after the defintion is not required, but if you are
%% working with someone they will likely thank you for explaining your
%% definition.  
%%
%% Now add you own definitions:
%%

%%%
%%% Mathematical operators (things like sin and cos which are used as
%%% functions and have slightly different spacing when typeset than
%%% variables are defined as follows:
%%%

 % The distance.

%%
%% This is the end of the preamble.
%% 

\begin{document}

%%
%% The title of the paper goes here.  Edit to your title.
%%

\title{Rational fibered Cubic fourfolds}

%%
%% Now edit the following to give your name and address:
%% 

\author{Hanine AWADA}
\email{hanine.awada@umontpellier.fr}
\address{Institut Montpellierain Alexander Grothendieck, CNRS, Universit\'e de Montpellier, Case Courrier 051, Place Eug\`ene Bataillon, 34095 Montpellier Cedex 5, France}

\urladdr{} % Delete if not wanted.

%%
%% If there is another author uncomment and edit the following.
%%

%\author{Second Author}
%\address{Department of Mathematics, University of South Carolina,
%Columbia, SC 29208}
%\email{second@math.sc.edu}
%\urladdr{www.math.sc.edu/$\sim$second}

%%
%% If there are three of more authors they are added in the obvious
%% way. 
%%

%%%
%%% The following is for the abstract.  The abstract is optional and
%%% if not used just delete, or comment out, the following.
%%%

\begin{abstract}
Some classes of cubic fourfolds are birational to fibrations over $\P^2$, where the fibers are rational surfaces. This is the case for cubics containing a plane (resp. an elliptic ruled surface), where the fibers are quadric surfaces (resp. del Pezzo sextic surfaces). It is known that the rationality of these cubic hypersurfaces is related to the rationality of these surfaces over the function field of $\P^2$ and to the existence of rational (multi)sections of the fibrations. 
We study, in the moduli space of cubic fourfolds,  the intersection of the divisor $\C_{8}$ (resp. $\C_{18}$) with  $\C_{14}$, $\C_{26}$ and $\C_{38}$, whose elements are known to be rational cubic fourfolds. We provide descriptions of the irreducible components of these intersections and give new explicit examples of rational cubics fibered in (quartic, quintic) del Pezzo surfaces or in quadric surfaces over $\P^2$. We also investigate the existence of rational sections for these fibrations. 
Under some mild assumptions on the singularities of the fibers, these properties can be translated in terms of Brauer classes on certain surfaces.
\end{abstract}

%%
%%  LaTeX will not make the title for the paper unless told to do so.
%%  This is done by uncommenting the following.
%%

\maketitle

%%
%% LaTeX can automatically make a table of contents.  This is done by
%% uncommenting the following:
%%

%\tableofcontents

%%
%%  To enter text is easy.  Just type it.  A blank line starts a new
%%  paragraph. 
%%

%%%%%%%%%%%%%%%%%%%%%%%%%%%%%%%%%%%%%%%%%%%%%%%%%%%%%%%%%%%%%%%%%%%%%%
\section{Introduction}\label{introduction}

A cubic fourfold is a smooth cubic hypersurface $X \subset \P^5$, that is the vanishing locus of a degree 3 homogeneous polynomial in 6 variables. Determining the rationality (or not) of $X$ is a very challenging open problem in algebraic geometry.

The fourfold $X$ is called \textit{special} if it contains an algebraic surface $S$ which is not homologous to a complete intersection. Hassett \cite{MR1738215} defines the loci $\C_d$ of special cubic fourfolds of discriminant $d$ and shows that these loci are non-empty irreducible divisors in the moduli space of cubic fourfolds $\mathcal{C}$, for $d > 6$ and $d \equiv 0,2 \pmod 6$. The values $d$ hence make up an infinite sequence of integers. Moreover, for an infinite, proper subset of the set of the divisors $\C_d$, the Hodge structure of the nonspecial cohomology of the cubic fourfold is essentially the Hodge structure of the primitive cohomology of a K3 surface. This K3 surface is said to be \textit{associated} to the special cubic fourfold. Hassett \cite[Section 5]{MR1738215} shows that special cubic fourfolds of discriminant $d$ have an associated K3 surface if and only if $d \nmid 4$, 9, or any odd prime $p \equiv 2 \pmod 3$. A natural suspicion is that any rational smooth cubic fourfold ought to have an associated K3 surface. For now, cubic fourfolds in $\C_{14}$, $\C_{26}$, $\C_{38}$ and $\C_{42}$ are proved to be rational (see \cite{c14}, \cite{MR3934590}, \cite{MR818549} or very recently \cite{2019arXiv190901263R}).\\

In this paper, we are interested in classes of cubic fourfolds that are birational to surface fibrations over $\P^2$. This is the case for cubics in $\C_8$ (resp. $\C_{18}$) containing a plane (resp. an elliptic ruled surface). The projection from the plane (resp. the linear system of quadrics through the elliptic ruled surface) gives the cubic a structure of quadric (resp. sextic del Pezzo) surface fibration over $\P^2$ (\cite{Ha2,2016arXiv160605321A}). The study of these fibrations and in particular the rationality of the surfaces in question over the function field of $\P^2$ is strongly related to the birational geometry of the fibered cubic fourfolds (see \cite{Ha2, 2016arXiv160605321A, MR3238111}). \\

In fact, Hassett \cite{Ha2} identifies countably many divisors in $\C_8$ consisting of cubic fourfolds containing a plane whose Clifford invariant is trivial, which implies rationality. We call Clifford invariant the Brauer class $\alpha \in Br(S)$ of the $\P^1$-bundle over a smooth degree two K3 surface $S$, which is the relative Hilbert scheme of lines $\H(0,1)$ in the fibers of the quadric fibration. This is well defined if the discriminant divisor of the quadric bundle is smooth \cite{MR3238111,Auel_2014}. The triviality of $\alpha$ is equivalent to the existence of a rational section of the associated quadric surface bundle  over $\P^2$. Each of these loci is a codimension two subvariety in the moduli space of cubic fourfolds $\C$. On the other hand, in \cite{2016arXiv160605321A}, the authors show other examples of rational cubic fourfolds in  $\C_{18}$, fibered in del Pezzo sextics, parametrized by a countably infinite union of codimension two subvarieties. A generic cubic fourfold $X$ in $\C_{18}$ contains an elliptic ruled surface $T$ of degree 6. The ideal of quadrics through $T$ define a rational map $X \dashrightarrow \P^2$ that displays the blow-up $\tilde{X}$ of $X$ along $T$ as a fibration in sextic del Pezzo surfaces over $\P^2$. Such a fibration is rational over the function field $\mathbb{C}(\P^2)$ when it has a rational section, and the codimension two loci described in \cite{2016arXiv160605321A} are exactly the loci were the associated del Pezzo fibration has a rational section. If the fibration has some mild \textit{``good''} proprieties (see  \textit{Def 3.1} - we will call these \it good del Pezzo fibrations \rm), one can associate two Brauer classes to such a fibration: the class  $\beta_2 \in Br(S)$ of a $\P^2$-bundle $\H(0,3)$ over a smooth degree two K3 surface $S$ and the class $\beta_3 \in Br(Z)$ of a $\P^1$-bundle $\H(0,2)$ over a triple cover $Z$ of $\P^2$. Here, $\H(0,3)$ and $\H(0,2)$ denote the relative Hilbert schemes over $\P^2$ parametrizing connected genus zero curves of respectively anticanonical degree three and two. The triviality of these Brauer classes $\beta_2$ or $\beta_3$ is equivalent to the existence of respectively a rational 2- or 3- multisection (see \cite{10.1093/imrn/rnz081}) of the del Pezzo fibration. The existence of a  rational section is equivalent to the triviality of both Brauer classes. \\

The goal of this paper is twofold. First, we study the geography of certain codimension two loci in the moduli space of cubic fourfold, obtained as intersections of $\C_8$ and $\C_{18}$ with other divisors that parametrize rational cubic fourfolds, $\C_{14}$, $\C_{26}$ and $\C_{38}$.  We give a complete description of the irreducible components of these intersections. 
These results are obtained by studying the intersection lattices of cubic fourfolds contained in these loci\footnote{In fact, for $\C_{14}\cap \C_8$ these are already known from \cite{MR3238111}}.
\begin{thm}
 \begin{enumerate}
     \item The intersection $\C_8 \cap \C_{26}$ in the moduli space of cubic fourfolds has eight irreducible components indexed by the discriminant $d \in \{29, 36, 48, 53, 61, 64, 68, 69\}$ of the intersection pairing of generic elements inside each component.
     \item The intersection $\C_8 \cap \C_{38}$ in the moduli space of cubic fourfolds has ten irreducible components indexed by the discriminant $d \in \{36, 45, 61, 68, 80, 85, 93,\\ 96, 100, 101\}$ of the intersection pairing of generic elements inside each component.
 \end{enumerate} 
\end{thm}

\begin{thm}\label{intersec}
\begin{enumerate}
\item The intersection $\C_{18}\cap \C_{14}$ has five irreducible components. Cubic fourfolds from these components have intersection lattice containing primitively embedded rank 3 sublattice of discriminant 84, 81, 72, 57 and 36.
\item The intersection $\C_{18}\cap \C_{26}$ has eight irreducible components. Cubic fourfolds from these components have intersection lattice containing a primitively embedded rank 3 sublattice of discriminant 48, 81, 108, 129, 144, 153, 156 and 36.
\item The intersection $\C_{18}\cap \C_{38}$ has eleven irreducible components. Cubic fourfolds from these components have intersection lattice containing a primitively embedded rank 3 sublattice of discriminant,  36, 81, 120, 153, 180, 201, 216, 225, 228, 45 and 57.
\end{enumerate}
\end{thm}
On the other hand, applying the results of the first part of the paper, we study the existence of rational (multi-) sections of the surface fibrations in cubic fourfolds, and study the birational geometry related to the existence of such cycles.

First we consider $\C_8$. Using our preceding intersection theoretical results, we give new examples of rational cubic fourfolds in $\C_{8}$ fibered in quadric surfaces with no rational section, hence not contained in the divisors of $\C_8$ described by Hassett.  We provide conditions on the intersection pairing of generic elements for the (non-) existence of the rational section (see \S \ref{section 4}).  The first example of such a cubic had been given in \cite{MR3238111}.

\begin{thm}
\begin{enumerate}
    \item Four of the components in the intersection $\C_8 \cap \C_{26}$ contain rational cubic fourfolds whose quadric fibration has no rational section.
    \item Five of the components in the intersection $\C_8 \cap \C_{38}$ in the moduli space of cubic fourfolds contain cubic fourfolds whose quadric fibration has no rational section.
\end{enumerate} 
\end{thm}

In order to translate this results in term of Brauer classes, we need the discriminant of the quadric fibration to be smooth. This is true for the generic cubic in $\C_8$ but since our examples lie in higher codimension, we need to construct at least a cubic with this property for each component.

Notably, with the help of Macaulay2, we produce an explicit rational example, with smooth discriminant divisor, with no rational section inside the intersection $\C_{8}\cap\C_{38}$.  Hence the Brauer class $\alpha$ on the degree two K3 surface is well defined and non-trivial.

For the case of $\C_{18}$, the scenario is far more complicated. Similarly to $\C_8$, we would like to consider examples of rational fibered cubic fourfolds such that the fibers are not rational over $\P^2$. Our starting idea was to produce an example of rational cubic fourfold fibrated in del Pezzo sextics, with no rational section over $\P^2$, using Macaulay2. But instead, we came across interesting examples of cubics fibered in quartic and quintic del Pezzo fibrations over $\P^2$. As far as we know, these are the first examples of this kind. 

\begin{thm}
    \begin{enumerate}
        \item The generic cubic fourfold in the locus of codimension 3 in $\C$, where the intersection lattice of cubics contains the following primitively embedded lattice is birational to a quintic del Pezzo fibration over $\P^2$:

$$\begin{pmatrix}
3 & 5 &1& 6\cr
5 & 13 & 0& \tau \cr
1 & 0&3&0\cr
6 & \tau &0& 18\cr 
\end{pmatrix}, $$
where $\tau \in \{9,10,11,12,13,14\}$.

        \item The generic cubic fourfold in a locus of codimension 4, where the intersection lattices of cubics contain the following primitively embedded lattice, is birational to a del Pezzo quartic fibration over $\P^2$. 

$$
\begin{pmatrix}
    3 & 5 &1&1& 6\cr
5 & 13 & 0&0& \tau \cr
1 & 0&3&0&0\cr
 1 & 0 & 0&3&0 \cr
 6 & \tau &0&0& 18\cr
\end{pmatrix},
$$
 with $\tau \in \{12,13,14\}$.  
    \end{enumerate}

\end{thm}

The existence of rational sections for these fibrations, and the related rationality and unirationality conditions over the function field of $\P^2$ are discussed in Section \ref{subsection 3.2} .

Unfortunately we do not manage to find explicit examples of cubic fourfolds fibrated in del Pezzo sextics, contained in the intersection of $\C_{18}$ with other divisors. Nevertheless, if we assume that these do exist in each component, our results easily allow to find the components where rational cubics with no rational sections should live.

\begin{thm}\label{nontrivial}
Let $X$ be a cubic fourfold, and let $\D_r$ denote the codimension 2 locus in $\C_{18}\cap\C_m$ ($m=14,\ 26,\ 38$), where generically cubics have rank 3 intersection matrix of discriminant $r$. Suppose $X$ lies in one of the following codimension 2 loci of $\mathcal{C}$ :

\begin{itemize}
\item inside $\C_{18}\cap \C_{14}$: $\D_{36}$ and $\D_{81}$;
\item inside $\C_{18}\cap \C_{26}$: $\D_{108}$ and $\D_{153}$;
\item inside $\C_{18}\cap \C_{38}$: $\D_{45}$, $\D_{81}$, $\D_{180}$, and $\D_{225}$.

\end{itemize}

Assume that the linear system $|2h - T|$ induces a sextic del Pezzo fibration over $\P^2$. Then the fibration has no rational section, and $X$ is rational.
\end{thm}

If furthermore the fibration is \textit{good}, then the Brauer class $\beta_2$ associated to the cubics of Thm. \ref{nontrivial} is non-trivial.

\medskip

The structure of this paper is as follows. In \S ~ \ref{section 2}, we give some notions and introduce some Hodge Theory and lattices associated to cubic fourfolds. In \S~ \ref{section 4}, we determine irreducible components of $\C_8 \cap\C_{26}$, $\C_8 \cap \C_{38}$ and study cubic fourfolds in these intersections giving new examples of rational cubic fourfolds fibered in quadric surfaces with a nontrivial Brauer class. In \S~ \ref{section 3}, we consider intersections of $\C_{18}$ with $\C_{14}$, $\C_{26}$ and $\C_{38}$ determining their irreducible components. We use these results to produce new examples of rational cubic fourfolds that are birational to fibrations in del Pezzo surfaces, and discuss their birational geometry.
\S~\ref{section 5} is the computational part of the paper, where the explicit examples that are necessary in the rest of the paper are constructed with Macaulay2. A code explaining the method to find other examples can be found attached.

\smallskip

%%%%%%%%%%%%%%%%%%%%%%%%%%%%%%%%%%%%%%%%%%%%%%%%%%%%%%%%%%%%%%%%%%%%%%%%
%% BEGIN The lyx specific LaTeX commands.
%%
\smallskip
\newblock{\textbf{Acknowledgments:} I am very grateful to the anonymous referee for clever suggestions, and for pointing out an error in a previous version of the paper. I also thank  Michele Bolognesi for stimulating discussions, exchanging ideas and suggesting many improvements.}

\section{Hodge theory for cubic fourfolds}\label{section 2}
Let X be a cubic fourfold over $\mathbb{C}$. Let us denote by $\C$ the moduli space of smooth cubic fourfolds in $\P^5$. It is a quasi-projective  twenty-dimensional variety. The Hodge diamond of X is as follows

\begin{center}

1\\0  \hspace{0.5cm} 0\\ 0 \hspace{0.5cm} 1 \hspace{0.5cm} 0\\ 0 \hspace{0.5cm} 0 \hspace{0.5cm} 0 \hspace{0.5cm} 0 \\ 0 \hspace{0.5cm} 1 \hspace{0.5cm} 21 \hspace{0.5cm} 1 \hspace{0.5cm} 0

\end{center}
 We focus on the middle cohomology of X, containing some nontrivial Hodge theoretic information.
 
 Let $L$ be the cohomology group $H^4(X,\Z)$, known as the \textit{cohomology lattice}, and $L_{prim} = H^4_{prim}(X,\Z) := \langle h^2 \rangle^{\perp}$ the \textit{primitive cohomology lattice}, where $h\in H^2(X,\Z)$ is the \textit{hyperplane class} defined by the embedding $X \subset \mathbb{P}^5$. Note that $L_{prim}$ is an even lattice (see \cite[\S 2]{MR1738215}).
 
 We consider more precisely the lattice of integral middle Hodge classes of $X$:
 $$M(X)= H^{2,2}(X) \cap H^4(X,\Z)= H^{2}(X, \Omega^2_X) \cap H^4(X,\Z)$$ equipped with the intersection form $(-,-)$. $M(X)$ is a positive definite lattice by the Hodge-Riemann bilinear relations. For cubic fourfolds, the (integral) Hodge conjecture holds (see  \cite[Theorem 18]{Voi} or \cite[Corollary 0.3]{mongardi2019curve} for a recent proof), and rational, algebraic and homological equivalences coincide for cycles of codimension 2. This means that the cycle map $CH^2(X) \to H^4(X,\mathbb{Z})$ is injective, where $CH^2(X)$ is the Chow group of codimension 2 cycles on $X$ up to rational equivalence (see \cite[\S 5]{colliot} or \cite{BP20}). In particular by the Hodge conjecture the algebraic cycles are the $(2,2)$-part of $H^4(X,\mathbb{Z})$. Denote by $d(M(X)) \in \Z$ the discriminant of the lattice $M(X)$, that is the determinant of the Gram matrix. The definition of \textit{special} cubic fourfold introduced in \S \ref{introduction} can be interpreted using $M(X)$: a smooth cubic fourfold X is \textit{special} if and only if the rank of $M(X)$ is at least 2. 

\begin{definition}
A \textit{labelling} of a special cubic fourfold consists of a positive definite rank two saturated sublattice $K_d$, with $h^2 \in K_d \subseteq M(X)$. The discriminant $d$ is the determinant of the intersection form on $K_d$.

\end{definition}
Special cubic fourfolds form a countably infinite union of irreducible divisors $\C_d$ in $\C$ with a labelling of discriminant $d$, where $d$ takes some integer values. 
  
Only few $\C_d$'s can be defined explicitly in terms of particular surfaces contained in X. For example, $\C_8$ is defined as the locus of cubic fourfolds containing a plane, $\C_{14}$ is the closure of the one containing a quartic scroll or equivalently a quintic del del Pezzo surface,  $\C_{18}$ is the closure of the locus of cubic fourfolds containing an elliptic ruled surface, and a few others.

\smallskip

We conclude this section with an important definition for the rest of the paper.

\begin{definition}
By a fibration in quadric (resp. sextic, quintic, quartic del Pezzo) surfaces over $\P^2$, we mean a morphism  $\sigma:Z \to \P^2$ from a fourfold $Z$, whose generic fiber is a smooth quadric (resp. sextic, quintic, quartic del Pezzo) surface.
\end{definition}

\section{\texorpdfstring{$\C_8$ and rational cubic fourfolds}{C8 and rational cubic fourfolds}} \label{section 4}

\subsection{Cubic fourfolds containing a plane}\label{subsection 4.1}

In this section, by lattice-theoretic calculations, we describe classes of rational cubic fourfolds containing a plane whose fibration in quadric surfaces has no rational section.

\medskip
Let Y be a cubic fourfold in $\C_8$ containing a plane $P$. For $h$ the hyperplane class of $Y$, the associated labelling $K_{8}$ has the following Gram matrix:
\begin{center}
    \bordermatrix{
 & h^2 & P \cr
h^2 & 3 & 1\cr
P & 1 & 3 
}.

\end{center}

Let $Q$ denote the class of the quadric surface residual to $P$ so that $h^2=P+Q$ and $\tilde{Y}$ the blow-up of Y along $P$. The projection from $P$ resolves into  a morphism
\begin{center}
    $q \colon \tilde{Y} \to \P^2 $.
\end{center}
The fibers of this morphism are quadric surfaces in the class $Q$ and cubic fourfolds containing a plane are birational to quadric surface bundles over $\mathbb{P}^2$ (see \cite{Ha2}).
Hassett \cite{Ha2} identifies countably many divisors in $\C_8$ parametrizing rational cubic fourfolds. Each of these loci is a codimension two subvariety in the moduli space of cubic fourfolds $\C$.
 
Recall that the \textit{discriminant divisor} $E$ is defined as the locus over which $q$ fails to be smooth. We say that the plane $P$ is \textit{good} if the  fibers of $q$ have at most isolated singularities. This is equivalent to having $E\subset \P^2$ smooth or also to having $X$ not containing another plane intersecting $P$ (see \cite[\S 1 Lemma 2]{MR2390291}, \cite[\S 1.5]{Auel_2014}). In this case, the double cover $S \to \P^2$  branched over $E$, a sextic curve, is a smooth K3 surface $S$ of degree 2. If $E$ is smooth, the relative Hilbert scheme of lines $\mathcal{H} (0,1)$ of the morphism $q$ is an \'{e}tale $\P^1$-bundle over $S$ (see \cite[\S 5]{BP20}). To such an object we can associate a Brauer class $\alpha \in Br(S)$ which is trivial if and only if $q$ has a rational section.

$$ \xymatrix @!0 @R=5mm @C=3cm {\relax
   \H(0,1)  \ar[r]^{\P^1}  \ar[rddddd] &  S \ar[ddddd]^{2:1}\\
  &\\
  &\\
    &\\
    &\\
    \tilde{Y} \ar[r]^{q} & \P^2 \supset E }$$
\begin{thm}[\cite{Ha2}]
If there is a class $C\in M(Y)$ 
such that $(C,Q)$ is odd then $Y$ is rational over $\mathbb{C}$.
\end{thm} 
In particular, $q$ has a rational section if and only if there exists an algebraic cycle $C \in M(Y)$ such that $(C,Q)=1$. In other words, the associated Brauer class $\alpha$ is trivial if and only if there exists an algebraic cycle $C \in M(Y)$ such that $(C,Q)=1$. According to Hassett, the triviality of the Brauer class implies the rationality of $Y$ over $\mathbb{C}$.\\

In the following, by lattice-theoretic calculations, we describe classes of rational cubic fourfolds containing a plane whose fibration in quadric surfaces has no rational section, respectively inside $\C_{8} \cap \C_{26}$ and inside $\C_{8} \cap \C_{38}$, hence not contained in the class of cubics described by Hassett.

\subsection{$\C_8 \cap \C_{26}$:} \label{C826}

A smooth cubic fourfold $Y$ is in $\C_8$ or $\C_{26}$ if and only if $M(Y)$ has primitive sublattice $K_8 :=\langle h^2, P \rangle$ or $K_{26}:=\langle h^2, S_{26} \rangle$  with the following Gram matrix
   \begin{center}
         \bordermatrix{
 & h^2 & S_{26} \cr
h^2 & 3 & 7\cr
S_{26} & 7 & 25
},
\end{center}
such that $P$ is a plane contained in  a general element of $\C_8$ and $S_{26}$ is a surface with one node obtained as the projection of a smooth del Pezzo surface $S \subset \P^7$ of degree seven from a line intersecting the secant variety of $S$ transversally (see \cite[\S 3]{MR3934590}), contained in  a general element of $\C_{26}$.

Thus a cubic fourfold $Y \in \C_8 \cap \C_{26}$ has a sublattice $\langle h^2, P, S_{26}\rangle \subset M(Y)$ with the following Gram matrix
\begin{center}
 \bordermatrix{
 & h^2 & P & S_{26} \cr
h^2 & 3 & 1 & 7 \cr
P & 1 & 3 & \tau \cr
S_{26} & 7 & \tau & 25\cr
},

\end{center}
for some $\tau=(P,S_{26})\in \Z$ depending on $Y$. The values of $\tau$ may be restricted following some properties and works of Voisin \cite{MR2390291} or Yang and Yu \cite{2019arXiv190501936Y}.

\smallskip
Denote by $M_{\tau}$ the lattice of rank 3 whose bilinear form has the previous Gram matrix. Let $\C_{M_{\tau}} \subset \C$ be the locus of smooth cubic fourfolds such that there is a primitive embedding $M_{\tau} \subset M(Y)$ of lattices preserving $h^2$.

\begin{prop}
The irreducible components of $\C_8 \cap \C_{26}$ are the subvarieties of codimension two $\C_{M_{\tau}}$ for $\tau \in \{-1,0,1,2,3,4,5,6\}$.
\end{prop}

\begin{proof}
We proceed as follows: first we find the set of possible values of $\tau$ for which $d(M_{\tau}) > 0$ and possibly nonempty. Then, for these values of $\tau$, we prove that $M_{\tau}$ is saturated. Finally, we find the associated irreducible components $\C_{M_{\tau}}$ which are nonempty i.e. don't have roots, that is vectors of norm 2.\\

By construction, $\C_8 \cap \C_{26} = \cup _{\tau \in \Z}\  \C_{M_{\tau}}$. We determine which components of $\C_{M_{\tau}}$ are possibly nonempty. Since for $Y \in \C_8 \cap \C_{26}$, $M(Y)$ is a positive definite lattice, by Sylvester's criterion, the sublattice $M_{\tau}$ must have positive discriminant. As $d(M_{\tau})=-3\tau^2+14\tau+53$, the only values of $\tau$ making a positive discriminant are $\tau \in I=\{-2,-1,0,1,2,3,4,5,6,7\} $. Hence, $\C_8 \cap \C_{26} = \cup _{\tau \in I}\ \C_{M_{\tau}}$.\\
Then, we prove that $\C_{M_{\tau}}$ is empty for $\tau = -2, 7$ by finding primitive roots (that is, primitive vectors of norm 2) in $M_{\tau,prim} = \langle h^2 \rangle^{\perp}$. Indeed, the vectors $(1,-3,0)$ and $(-3,2,1)$ form a basis for $M_{\tau,prim}$. For all $R_{a,b} \in M_{\tau,prim}$, $R_{a,b}= (a-3b,-3a+2b,b)$ for some $a,b \in \Z$, $\tau \in I$; for $\tau=-2$, we find primitive roots $\pm R_{0,1}$;  for $\tau=7$, we find primitive roots $\pm R_{1,1} = \pm (-2,-1,1)$. Hence, by \cite[\S 4 Proposition 1] {MR2390291}, $\C_{M_{\tau}}$ is empty for $\tau = -2,7$. We are left with $\C_{M_{\tau}}$ possibly nonempty only for $\tau \in \{-1,0,1,2,3,4,5,6\}$. The corresponding discriminants $d(M_\tau)$ are 36, 53, 64, 69, 68, 61, 48 and 29.\\

 \begin{figure}
  \begin{center}
 
\begin{tabular}{|c|c|c|c|c|c|c|c|c|}

\hline
$\tau$&-1& 0 & 1& 2 & 3 & 4 &5& 6 \\
\hline
$d(M_{\tau})$ &36& 53 & 64 & 69 & 68 & 61 &48& 29  \\
\hline
\end{tabular}
\caption{Irreducible components of $\C_{8} \cap \C_{26}$}
\label{t1}
   
  \end{center}
\end{figure}

 \smallskip
We prove now that the components $\C_{M_{\tau}}$ are irreducible. We first note that the rank of the lattice $M(X)$ is an upper-semicontinuous function on $\C$; the irreducible components of $\C_{M_{\tau}}$ correspond then to rank 3 overlattices B of $M_\tau$ (a finite index sublattice for some $\tau$ \textit{i.e.} $B/M_{\tau}$ is a finite abelian group) which is primitively embedded into $L$. 
By standard lattice theory, we have that for an embedding $M_{\tau} \hookrightarrow B$ with finite index $[B:M_{\tau}]= |B/M_{\tau}|$, $|d(B)|.[B:M_{\tau}]^{2} = |d(M_{\tau})|$ (see \cite{Nikulin_1980} or \cite{Ser});\\ 
 We will prove that no proper finite overlattices exist; for $\tau= 0,2,4,6$  the discriminants of $M_{\tau}$ are squarefree, so there are no proper finite overlattices.\\
For the remaining cases, we can take $h^2$ and $P$ as a part of a basis of the overlattice $B$. Let $U$ be a vector that completes this to a basis such that $U=x h^2+y P+z S_{26}$, with $x,\ y,\ z \in \mathbb{Q}$.  Consider the natural embedding of $M_{\tau}$ in B that can be written as follows:

\begin{center}
   $ \begin{pmatrix}
    1&0&x\\
    0&1&y\\
    0&0&z\\
    \end{pmatrix}^{-1}= \begin{pmatrix}
    1&0&-x/z\\
    0&1&-y/z\\
    0&0&1/z\\ 
    \end{pmatrix} \in \mathcal{M}_{3,3}(\Z)$.
\end{center}
We can take $z=\frac{1}{n}$, for some $n \in \Z$ and $x'=n x, y'=y n \in \Z$; then $U=\frac{1}{n}(x' h^2+y' P+S_{26})$. By adding multiples of $h^2$ and $P$, we may ensure that $0 \leq x',y'<n$.

 Computing intersections, we have the following:
 \begin{eqnarray*}
 (U,h^2) &=& \frac{1}{n}(3x'+y'+7)= a_0, \\
 (U,P)&=& \frac{1}{n}(x'+3y'+\tau) = b_0,\\
 (U,U) &= &\frac{1}{n^2}(3x'^2+3y'^2+14x'+2\tau y'+2x' y'+25)= c_0.
 \end{eqnarray*}

Hence, the Gram matrix of $B$ is:
 \begin{center}
          \bordermatrix{
& h^2 & P & U \cr
h^2 & 3 & 1 & a_0 \cr
P & 1 & 3 & b_0 \cr
U & a_0 & b_0 & c_0\cr
}.

 \end{center}

 Now we check each case separately for possible values of $\tau , n, x'$ and $y'$:
 \begin{enumerate}
     \item 
     $\tau =-1$: We see that $n$ can be 2, 3 or 6.\\ Remark that for all possible values of $n$, $x'$ and $y'$ other than ($n=3,\ x'=1$ and $y'=2$), the Gram matrix of $B \notin \mathcal{M}_{3,3}(\Z)$.  For ($n=3,\ x'=1$ and $y'=2$), $B$ has the following Gram matrix:
     \begin{equation*}
\begin{pmatrix}
3&1&4\\
1&3&2\\
4&2&6\\
\end{pmatrix},
\end{equation*}
which has the root $(-2,0,1)$. Then no such overlattices exist. Thus $\C_{M_{-1}}$ is irreducible.
     \item 
     $\tau =1$: We see that $n$ can be 2, 4 or 8.\\ Observe that for all possible values of $n$, $x'$ and $y'$, the Gram matrix of $B$ is not in $\mathcal{M}_{3,3}(\Z)$. Thus $\C_{M_1}$ is irreducible.
    
     \item
     $\tau =3$: $n$ can be 2.\\We notice that for all possible values of $x'$ and $y'$, the Gram matrix of $B$ is non-integral. Then no such overlattices exist. Thus $\C_{M_3}$ is irreducible.
     \item
     $\tau =5$: We observe that $n$ can be 2 or 4.\\
     Remark that for $n=4$, $|d(B)|=3$ which is impossible by \cite[Lemma 7.8]{yang2021lattice}.     
     Otherwise, for all possible values of $x'$ and $y'$, the Gram matrix of $B$ is non-integral. Then no such overlattices exist. Thus $\C_{M_5}$ is irreducible. \\

To check the (non)emptiness of these $\C_{M_{\tau}}$, we proceed as follows; for all $\tau \in \{-1,0,1,2,3,4,5,6\}$, $M_{\tau}$ is a positive definite saturated sublattice of rank 3: 
 \begin{center}
    $h^2 \in M_\tau \subset M(Y) \subset L$.
 \end{center}
 Furthermore, let $v = x h^2+ y P+ z S_{26} \in M_{\tau}$  for $x, y, z \in \Z$, we get
 \begin{center}
     $(v,v) = 3x^2 + 3y^2 + 25z^2 + 2xy+ 14xz + 2\tau yz$, $\tau \in \{-1,0,1,2,3,4,5,6\}$.
 \end{center}
  For these values of $\tau$, there exists no $v \in M_{\tau}$ such that $(v,v) = 2$. Thus, by \cite[Lemma 2.4]{2019arXiv190501936Y}, $\C_{M_{\tau}}$ is nonempty and has codimension two.\\  Note that the rank of the lattices is small compared to 21 so there is no issues with finding a primitive embedding in $L$.
   
 \end{enumerate}
 \end{proof} 
In the following, we address the question of the (non)triviality of the Brauer class.
\begin{thm}
Let $Y$ be a general cubic fourfold in $\C_8 \cap \C_{26}$ (so that $M(Y)$ has rank 3) containing a good  plane $P$. The Brauer class $\alpha \in Br(S)$ of $Y$ is trivial if and only if $\tau$ is even.
\end{thm}

\begin{proof}

 If $\tau$ is even, then a cycle $S_{26}+3P \in M(Y)$ exists such that $(S_{26}+3P,Q) =1-\tau \equiv 1\pmod 2$ (odd). Hence $\alpha$ is trivial by the application of the criterion of Hassett (see \cite{Ha2}).

\smallskip
 If $\tau$ is odd, then $M_{\tau}$ has rank 3 and even discriminant, hence $\alpha$ is nontrivial (see \cite[Proposition 2]{MR3238111}).
\end{proof}

\begin{cor}
The four irreducible components $\C_{M_{\tau}}$ of $\C_8 \cap \C_ {26}$ corresponding to $\tau= -1,$ $1$, $3$, $5$  contain examples of rational cubic fourfolds whose associated quadric surface bundles do not have a rational section.
\end{cor}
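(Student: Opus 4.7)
The approach is simply to assemble the three preceding results. First, every cubic fourfold in $\C_{26}$ is rational by the cited result of \cite{MR3934590}. Second, by the preceding proposition, for each $\tau \in \{-1,1,3,5\}$ the component $\C_\tau$ is a nonempty, irreducible, codimension-two locus in the moduli space $\C$. Third, by the theorem just proved, a general cubic fourfold $Y \in \C_\tau$ containing a \emph{good} plane has nontrivial Brauer class $\beta \in Br(S)$ exactly when $\tau$ is odd; and by the discussion in \S\ref{subsection 5.1}, triviality of $\beta$ is equivalent to the existence of a rational section of the quadric fibration $q \colon \tilde Y \to \P^2$.

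Given these three inputs, the plan is as follows. For each of the four values $\tau \in \{-1,1,3,5\}$ I would pick a general $Y \in \C_\tau$ and verify the three simultaneous conditions: (i) $Y \in \C_{26}$, so $Y$ is rational; (ii) $Y$ contains a good plane $P$, so that the quadric bundle structure is well-defined and the theorem applies; (iii) $\tau$ is odd, so by the theorem $\beta$ is nontrivial and hence $q$ admits no rational section. Condition (i) holds throughout $\C_\tau \subset \C_{26}$, condition (iii) is the defining property of the chosen components, so the only genuinely open condition to check is (ii).

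The one point requiring a short argument is that a \emph{general} element of each $\C_\tau$ contains a good plane, i.e.\ that the discriminant sextic $D \subset \P^2$ of the projection from $P$ is smooth. Goodness of the plane is an open condition on $\C_8$, and it holds on a dense open subset by Hassett \cite{MR1658216}. Since $\C_\tau$ is irreducible (by the proposition) and is a divisor in $\C_8$, it suffices to exhibit a single $Y \in \C_\tau$ with a good plane; by the upper semicontinuity of the rank of $A(Y)$ and openness of smoothness of $D$, a general deformation of such a $Y$ inside $\C_\tau$ still contains a good plane while preserving the discriminant lattice.

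Putting everything together, for each $\tau \in \{-1,1,3,5\}$ a general element $Y \in \C_\tau$ simultaneously lies in $\C_{26}$ (hence is rational), contains a good plane (hence carries a quadric surface bundle structure over $\P^2$), and has nontrivial Brauer class (hence no rational section), proving the corollary. The main potential obstacle is the openness/genericity step for the goodness of the plane, but this is standard and follows from the irreducibility of $\C_\tau$ combined with the fact that goodness of the plane is an open condition already known to be nonempty on $\C_8$.
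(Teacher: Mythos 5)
Your proposal is correct and takes essentially the same route as the paper, which states this corollary without a separate proof precisely because it is the immediate combination of the rationality of $\C_{26}$, the nonemptiness and irreducibility of the components $\C_{\tau}$ from the preceding proposition, and the nontriviality of $\beta$ for odd $\tau$ from the preceding theorem. Your extra paragraph on the genericity of the \emph{good} plane addresses a point the paper leaves implicit (and does not verify by an explicit example in the $\C_8\cap\C_{26}$ case), so it is a reasonable, if still somewhat sketched, supplement rather than a divergence.
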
 

\subsection{\texorpdfstring{$\C_8 \cap \C_{38}$}{C8 \textbackslash cap C38}}\label{subsection 4.3}

Using same methods as before, we compute the intersection between $\C_8$ and $\C_{38}$.

A cubic fourfold $Y$ is in $\C_8$ or $\C_{38}$ if and  only if $M(Y)$ has primitive sublattice $K_8 :=\langle h^2, P \rangle$ or $K_{38}:=\langle h^2, S_{38}\rangle$ with the following Gram matrix \begin{center}
\bordermatrix{
 & h^2 & S_{38} \cr
h^2 & 3 & 10\cr
S_{38} & 10 & 46
},
\end{center}such that $P$ is a plane and $S_{38}$ is the general degree 10 smooth surface of sectional genus 6 obtained as the image of $\mathbb{P}^2$ by the linear system of plane curves of degree 10 having 10 fixed triple points (see \cite[\S 4]{MR3934590}), contained in a general element of $\C_{38}$.\\
Hence, $Y \in \C_8 \cap \C_{38}$ has a sublattice $\langle h^2, P, S_{38}\rangle \subset M(Y)$ with Gram matrix: 

\begin{center}

\label{lattice2}
 \bordermatrix{
 & h^2 & P & S_{38} \cr
h^2 & 3 & 1 & 10\cr
P & 1 & 3 & \tau \cr
S_{38} & 10 & \tau & 46\cr
},
\end{center}
for some $\tau \in \Z$ depending on $Y$.
\begin{prop}
The irreducible components of $\C_8 \cap \C_{38}$ are the subvarieties of codimension two  $\C_{M_{\tau}}$ for $\tau \in \{-1,0,1,2,3,4,5,6,7,8\}$.
\end{prop}

\begin{proof}
As $d(M_{\tau})=-3\tau^2+20\tau+68$, the only values of $\tau$ inducing a positive discriminant are $\tau \in J=\{-2,-1,0,1,2,3,4,5,6,7,8,9\} $.\\
Let $M_{\tau,prim} = \{ (x,y,z) \in \mathbb{Z}^3\ \vert\   3x+y+10z=0\}$;
indeed, the vectors $(1,-3,0)$ and $(-3,-1,1)$ form a basis for $M_{\tau,prim}$; for all $R_{a,b} \in M_{\tau,prim}$, $R_{a,b}= (a-3b,-3a-b,b)$ for some $a,b \in \mathbb{Z}$, $\tau \in J$.\\
For $\tau=-2$, we find primitive  roots $\pm R_{-1,1}= \pm (-4,2,1)$. Hence, by  \cite[\S 4 Proposition 1]{MR2390291}, $\C_{M_{-2}}$ is empty ;
We are left with $\C_{M_{\tau}}$ possibly nonempty only for $\tau \in \{-1,0,1,2,3,4,5,6,7, 8,9\}$. The corresponding discriminants $d(M_\tau)$ are 45, 68, 85, 96, 101, 100, 93, 80, 61, 36 and 5.\\

Similarly to the previous section, we prove that the components $\C_{M_{\tau}}$ are irreducible. For $\tau =1,3,5,7$ the discriminants are squarefree, so there are no proper finite overlattices. For the remaining cases, we can prove the any overlattice in $\mathcal{M}_{3,3}(\mathbb{Z})$ has a primitive root. Therefore no such proper overlattices exist.

    \smallskip
Hence, for all $\tau \in \{-1,0,1,2,3,4,5,6,7,8,9\}$, $M_{\tau}$ is a positive definite saturated sublattice of rank 3: 
 \begin{center}
    $h^2 \in M_\tau \subset M(Y) \subset L$. 
 \end{center}
 Furthermore, let $v = x h^2+ y P+ z S_{38} \in M_{\tau}$, for $x, y, z \in \mathbb{Z}$, we get
 \begin{center}
     $(v,v) = 3x^2 + 3y^2 + 46z^2 + 2x y+ 20x z + 2\tau y z$;
 \end{center}
 for $\tau = 9$, we have that $(-2h^2-2P+S_{38},-2h^2-2P+S_{38})=2$. Then $\C_{M_{9}} \subset \C$ is empty (see \cite[Lemma 2.4]{2019arXiv190501936Y}). For the rest of the values of $\tau$, there exists no $v \in M_{\tau}$ such that $(v,v) = 2$, $\C_{M_{\tau}} \subset \C$ are nonempty irreducible and have codimension 2 (see \cite[Lemma 2.4]{2019arXiv190501936Y}). 
 \begin{figure}
\begin{center}
\begin{tabular}{|c|c|c|c|c|c|c|c|c|c|c|}

\hline
$\tau$ & -1&0 & 1 & 2 & 3 & 4 & 5 & 6 & 7&8 \\
\hline
$d(M_{\tau})$ & 45& 68 & 85 & 96 & 101 & 100 & 93 & 80 & 61&36  \\
\hline
\end{tabular}
\label{t2}

\end{center}
 \end{figure}

\end{proof}

\begin{thm}
Let $Y$ be a general cubic fourfold in $\C_8 \cap \C_{38}$ (so that $M(Y)$ has rank 3) containing a good plane $P$. The Brauer class $\alpha \in Br(S)$ of $Y$ is trivial if and only if $\tau$ is odd.
\end{thm}

\begin{proof}

 If $\tau$ is odd, then a  cycle $S_{38}+5P \in M(Y)$ exists such that $(S_{38}+5P,Q) =-\tau \equiv 1\ (mod$ $2)$ (odd).  Hence $\alpha$ is trivial by the application of the criterion (see \cite{Ha2}).

\smallskip
If $\tau$ is even, then $M_{\tau}$ has rank 3 and even discriminant,
hence $\alpha$ is nontrivial (see \cite[Proposition 2]{MR3238111}).
\end{proof}

\begin{cor}\label{cor 4.6}
$\C_8 \cap \C_ {38}$ has five smooth irreducible components, corresponding to $\tau = 0,2,4,6,8$, containing examples of rational cubic fourfolds whose associated quadric surface bundles do not have a rational section.
\end{cor}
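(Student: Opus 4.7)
The plan is to combine the two immediately preceding results (the Proposition on irreducible components and the Theorem on the parity of $\tau$) with the established rationality of cubic fourfolds in $\C_{38}$, so the argument reduces to an assembly step rather than any new lattice computation.

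First, I will invoke the preceding Proposition, which identifies the irreducible components of $\C_8 \cap \C_{38}$ as the codimension-two loci $\C_{\tau}$ for $\tau \in \{-1,0,1,2,3,4,5,6,7,8\}$. In particular, each of the five components with $\tau \in \{0,2,4,6,8\}$ is nonempty. Next, the preceding Theorem asserts that for a general cubic fourfold $Y \in \C_{\tau}$ containing a good plane $P$, the Brauer class $\beta \in \mathrm{Br}(S)$ associated to the quadric surface bundle $q \colon \tilde{Y} \to \P^2$ is nontrivial precisely when $\tau$ is even; equivalently, by the criterion of Hassett recalled in \S\ref{subsection 5.1}, the bundle $q$ admits no rational section exactly for those $\tau$.

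Finally, I appeal to the theorem of Russo--Stagliano\`o \cite{MR3934590}, cited in the introduction, which shows that every cubic fourfold in $\C_{38}$ is rational, or more precisely, that rationality holds on a Zariski-dense open subset of the irreducible divisor $\C_{38}$. Since each $\C_\tau \subset \C_{38}$ is a nonempty irreducible subvariety, this dense open subset meets $\C_\tau$, so a generic element of $\C_\tau$ is a rational cubic fourfold. Combining these three facts, for each $\tau \in \{0,2,4,6,8\}$ a generic cubic in $\C_\tau$ is simultaneously rational (because it lies in $\C_{38}$) and has nontrivial Clifford invariant (because $\tau$ is even), which proves the corollary.

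I do not expect any substantial obstacle: the statement is a direct corollary of the Proposition and Theorem preceding it, together with the rationality result of \cite{MR3934590}. The only mildly delicate point is verifying that the ``good plane'' hypothesis required by the preceding Theorem, and the openness of the locus on which the rationality construction of \cite{MR3934590} works, are both generic conditions inside the irreducible variety $\C_\tau$; since each is the complement of a proper Zariski-closed subset of $\C_{38}$, their intersection with $\C_\tau$ is a nonempty open subset, which suffices.
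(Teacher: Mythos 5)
Your proposal is essentially the paper's (implicit) argument: the corollary is stated without proof there, and it follows exactly as you say by combining the Proposition listing the components $\C_\tau$, $\tau\in\{-1,0,\dots,8\}$, with the Theorem asserting that $\beta$ is nontrivial precisely for even $\tau$, and with the rationality of cubics in $\C_{38}$ cited in the introduction. One logical slip is worth flagging: your fallback claim that a Zariski-dense open subset of $\C_{38}$ on which rationality holds must meet each $\C_\tau$ ``since $\C_\tau\subset\C_{38}$ is a nonempty irreducible subvariety'' is not valid reasoning --- $\C_\tau$ is a \emph{proper closed} subvariety of the divisor $\C_{38}$, and a dense open subset of an irreducible variety can perfectly well miss a proper closed subvariety. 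The argument is rescued only because the stronger statement is available (every smooth cubic in $\C_{38}$ is rational, by \cite{MR3934590} combined with specialization of rationality), which is what the paper actually uses; if only generic rationality on $\C_{38}$ were known, your assembly would have a genuine gap at this point. The same caveat applies to the ``good plane'' genericity: what is needed is that the good-plane locus meets each $\C_\tau$, which the paper supports by exhibiting the explicit example of \S\ref{subsection 5.2} rather than by an openness-in-$\C_{38}$ argument.
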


In \S~ \ref{subsection 5.2}, we construct an explicit example of rational smooth cubic fourfold containing a \textit{good} plane in each irreducible component of the intersection $\C_{8} \cap \C_{38}$. This means that it makes sense to consider the associated Brauer class $\alpha \in Br(S)$ on the degree 2 K3 surface. In particular, in our explicit example the associated Brauer class is nontrivial since the quadric bundle has no rational section.

\section{\texorpdfstring{$\C_{18}$ and rational cubic fourfolds}{C18 and rational cubic fourfolds}}\label{section 3}

The goal of this section is twofold. First we describe the irreducible components of the intersection of $\C_{18}$ with the divisors $\C_{14}$,\ $\C_{26},$ and $\C_{38}$. Then we use these intersection theoretical results to showcase loci of \it rational \rm cubic fourfolds, which are fibered in del Pezzo surfaces. In each case, we consider gemetric conditions that imply the rationality of the fibers over the function field of $\P^2$ and hence of the associated cubic fourfolds.

 \subsection{Cubic fourfolds containing an elliptic ruled surface}\label{subsection 3.1}
A generic cubic fourfold in $\C_{18}$ contains an elliptic ruled surface of degree 6 (see \cite{2016arXiv160605321A}).\\ Let $X \in \C_{18}$  be a generic cubic fourfold containing an elliptic ruled surface $T$. The Gram matrix of the  associated labelling $K_ {18}$ is as follows: 
\begin{center}\bordermatrix{
 & h^2 & T \cr
h^2 & 3 & 6\cr
T & 6 & 18 
}.

\end{center}

To construct $T$, we start by fixing two disjoint planes $P'$ and $P$. Then, we choose 3 quadrics $\{Q_1,Q_2,Q_3\}$ containing the 2 disjoint planes. The intersection of the quadrics $Q_1 \cap Q_2 \cap Q_3$ has degree 8, and it is the union of $T$ and of the two planes $P$ and $P'$. \\
Let $\tilde{X}$ denote the blow-up of X along T and let 

\smallskip
\begin{center}
    $\pi \colon \tilde{X} \to \P^2 $
\end{center}
be the morphism induced by the linear system of quadrics containing $T$. For generic X, the generic fiber of $\pi$ is a del Pezzo surface of degree 6.

\medskip
Note that a singular del Pezzo surface is a surface \textbf{P} with \textit{ADE} singularities and ample anticanonical class such that $K_S^2=6$. They are classified as follows (see \cite[Section 2]{10.1093/imrn/rnz081} or \cite[Section 5]{2016arXiv160605321A} \textit{for a complete description}):
\begin{itemize}
    \item Type I: \textbf{P} has one $A_1$ singularity.
\item Type II: \textbf{P} has one $A_1$ singularity obtained in a different way from Type I.
\item Type III: \textbf{P} has two $A_1$ singularities.
\item Type IV: \textbf{P} has one $A_2$ singularity.
\item Type V: \textbf{P} has a $A_1$ and  a $A_2$ singularity.
\end{itemize}
\smallskip
Type I and II occur in codimension one in the moduli stack of sextic del Pezzo surfaces. Type III and IV occur in codimension two; type V occurs in codimension three.

\begin{definition}{\cite[Definition 11]{2016arXiv160605321A}} \textit{Let $\mathbb{P}$ be a smooth complex projective surface. A good sextic del Pezzo fibration consists of a smooth fourfold $Z$ and a flat projective morphism $\pi \colon Z \to \mathbb{P}^2$ with connected fibers such that the fibers are either smooth or singular sextic del Pezzo surfaces of Type I, II, III, IV or V. Let $B_i$ denote the closure of the locus of Type \textit{i} fibers in $\mathbb{P}^2$. $B_i$ has the following properties: \\
$\bullet$ $B_I$ is a non-singular curve;\\
$\bullet$ $B_{II}$ is a curve, non-singular away from $B_{IV}$;\\
$\bullet$ $B_{III}$ is finite and coincides with the intersection of $B_I$ and $B_{II}$, which is tranverse;\\
$\bullet$ $B_{IV}$ is finite and $B_{II}$ has cusps at $B_{IV}$;\\
$\bullet$ $B_{V}$ is empty.}
\end{definition} \label{gd}

The discriminant curve of a good sextic del Pezzo fibration $\pi$ 
has two irreducible components, a smooth
sextic curve $B_I$ and a sextic curve $B_{II}$ with 9 cusps
(see \cite{2016arXiv160605321A}, \cite{10.1093/imrn/rnz081} or \cite{BP20} \textit{for more details}). 
In this section, if a cubic $X\in \C_{18}$ has an associated good del Pezzo fibration, we will call it a \it good \rm cubic. More generally, for a \textit{good} del Pezzo fibration $\pi$ we consider the following construction.

Recall that a smooth sextic del Pezzo surface can be described as the blow up of $\P^2$ in three general points. Let us now consider two different Hilbert schemes of curves in the fibers of $\pi$. Let us denote by $\H (0, 3)$ the relative Hilbert scheme of connected genus zero curves with anticanonical degree 3 contained in the fibers. There are two 2-dimensional families of such curves on a smooth del Pezzo sextic. One is given by the strict transforms of the lines in $\P^2$ that do not pass through any of the three blown up points $p_1$, $p_2$ and $p_3 \in \P^2$ of the corresponding del Pezzo surface. The second one is given by conics passing through the three base points. Hence the Stein factorization of $\mathcal{H}(0,3) \to \mathbb{P}^2$ yields an \'{e}tale $\P^2$-bundle  $\H(0,3)$ over a smooth degree two K3 surface $S$ branched on $B_I$. 
 
 $$ \xymatrix @!0 @R=5mm @C=3cm {\relax
   \H(0,3)   \ar[r]^{\P^2}  \ar[rddddd] &  S \ar[ddddd]^{2:1}\\
  &\\
  &\\
    &\\
    &\\
    \tilde{X} \ar[r]^{\pi} & \P^2 \supset B_I }$$
We will denote by $\beta_2$ the Brauer class of this $\P^2$-bundle in $Br(S)$.

Let us consider now $\H(0,2) \to \P^2$ the relative Hilbert scheme of connected genus zero curves with anticanonical degree 2 contained in the fibers. The Stein factorization yields an \'{e}tale $\P^1$-bundle  $\H (0,2)$ over a non singular surface $Z$. In fact, associated to a \textit{good} del Pezzo fibration $\pi$, there is a non-singular triple cover $ Z \to \P^2$ branched along a cuspidal sextic $B_{II}$. On every geometric fiber, the $\P^1$-bundle is given by the strict transform of the lines through each of the 3 base points of the corresponding del Pezzo sextic.
 
  $$ \xymatrix @!0 @R=5mm @C=3cm {\relax
   \H(0,2)   \ar[r]^{\P^1}  \ar[rddddd] &  Z \ar[ddddd]^{3:1}\\
  &\\
  &\\
    &\\
    &\\
    \tilde{X} \ar[r]^{\pi} & \P^2 \supset B_{II} }$$
 
Similarly to the preceding case, we will denote by $\beta_3\in Br(Z)$ the Brauer class of this $\P^1$-bundle. 

 For more details on the Brauer classes of a sextic del Pezzo surface see \cite{Auel_2018} or \cite{Kuznetsov_2009}.

\smallskip

Furthermore, for a \textit{good} sextic del Pezzo fibration $\pi: \tilde{X}\to \P^2$, we have the following (see \cite[Proposition 5.20]{10.1093/imrn/rnz081}).

\begin{prop}\label{kuzsections}
\begin{enumerate}
    \item The \textit{Brauer class} $\beta_2 \in Br(S)$ is trivial if and only if $\pi$ has a rational 2-multisection.
    \item The \textit{Brauer class} $\beta_3 \in Br(Z)$ is trivial if and only if $\pi$ has a rational 3-multisection.
\end{enumerate}
\end{prop}

It is known \cite[Thm. 6]{2016arXiv160605321A} that the del Pezzo fibration associated to a good cubic fourfold $X\in \C_{18}$ always has a rational 3-section. It is worth to note that this rational 3-section does not come as the strict transform of a 2-dimensional algebraic cycle in $X$ but rather as a surface obtained as the inverse image of a curve inside the elliptic ruled surface. This is surely of interest, but our analysis concentrates on 2-cycles on cubic fourfolds, since are the ones we can control better on special loci of the moduli space. This is why in the following we will concentrate on rational (multi)sections that are obtained from 2-cycles on the cubic fourfold. 
In this sense, it is worth recalling the following results (\cite[Prop. 2.3]{ouc21} and \cite[Prop. 8]{2016arXiv160605321A}).

\smallskip

Let $F\in M(X)$ denote the class of the fiber of the rational map given by quadrics through $T$, for a cubic fourfold $X\in \C_{18}$. That is,

$$F=4h^2 - T.$$

\begin{prop}\label{ouch}
 The del Pezzo fibration $\pi: \tilde{X} \to \P^2$ has a rational
section if and only if there is an algebraic cycle class $R \in H^{2,2}(X, \Z)$ such that the intersection $(R,F)$ is 1 or 2.
\end{prop}

\begin{prop}\label{equirat}
Let $\mathcal{G} \to \P^2$ be a del Pezzo sextic fibration. The following are equivalent:
\begin{enumerate}
\item $\mathcal{G}$ is rational over the function field $\mathbb{C}(\P^2)$;\\
\item $\mathcal{G}$ admits an rational section $\P^2 \dashrightarrow \mathcal{G}$;\\
\item $\mathcal{G}$ admits a rational multi-section of degree prime to six;\\
\item if $\mathcal{G}$ is good, the Brauer classes $\beta_2$ and $\beta_3$ are trivial.
\end{enumerate}
\end{prop}

In the next section we will describe the interplay between special surfaces contained in cubic fourfolds and irreducible components of $\C_{18} \cap \C_{14}$. 

\subsection{\texorpdfstring{$\C_{18} \cap \C_{14}$}{C18 \textbackslash cap C14}}\label{subsection 3.2}
Let $X$ be a cubic fourfold in $\C_{18} \cap \C_{14}$; $M(X)$ has primitive sublattices $K_{18} :=\langle h^2, T \rangle$ and $K_{14}:=\langle h^2, D \rangle$ with its Gram matrix \begin{center}
 \bordermatrix{
 & h^2 & D \cr
h^2 & 3 & 5\cr
D & 5 & 13 
},
\end{center} such that $D$ is (possibly  a degeneration of) a quintic del Pezzo surface (see \cite{c14}), $T$ is  (possibly a degeneration of) an elliptic ruled surface. 

Hence, $X \in \C_{18} \cap \C_{14}$ has a sublattice $\langle h^2, D,T \rangle \subset M(X)$ with the following Gram matrix

\begin{center}
 \bordermatrix{
 & h^2 & D & T\cr
h^2 & 3 & 5 & 6\cr
D & 5 & 13 & \tau \cr
T & 6 & \tau & 18\cr},

\end{center}
for some $\tau \in \Z$ depending on $X$.

\smallskip
Denote by $M_{\tau}$ the lattice of rank 3 whose bilinear form has the previous Gram matrix (by abuse of notation, we use $M_{\tau}$ to denote the Gram matrix too) We will denote by $\C_{M_{\tau}} \subset \C$ the locus of cubic fourfolds such that there is a primitive embedding $M_{\tau} \subset M(X)$ of lattices preserving $h^2$.

\begin{thm} \label{C18/14}

The irreducible components of $\C_{18} \cap \C_{14}$ are the subvarieties of codimension two $\C_{M_{\tau}}$ given by the following rank 3 Gram matrix

  \begin{center}
       $M_{\tau} :=$\bordermatrix{
 & h^2 & D & T\cr
h^2 & 3 & 5 & 6\cr
D & 5 & 13 & \tau \cr
T & 6 & \tau & 18\cr
},
     \end{center}
     where  $10 \leq  \tau \leq 14$.
Moreover, the associated discriminants $d(M_{10}),\dots, d(M_{14})$ are 84, 81, 72, 57, 36 respectively.  

\end{thm}

\begin{proof}
We proceed as follows: first we find the set of possible values of $\tau$ for which $d(M_{\tau}) > 0$. Then, for these values of $\tau$, we prove that $M_{\tau}$ is saturated. Finally, we find the associated irreducible components $\C_{M_{\tau}}$ which are nonempty i.e. don't have roots, that is vectors of norm 2.  \\Note that $M_{\tau}$, defined as a sublattice of a positive definite lattice, must have positive discriminant by Sylvester's criterion. As $d(M_{\tau})=-3\tau^2+60\tau-216$, the only values of $\tau$ making a positive discriminant are $\tau \in \{5,6,7,8,9,10,11,12,13,14,15\} $.\\For these values of $\tau$, $M_{\tau}$ is a positive definite sublattice of rank 3 containing $h^2$. The corresponding discriminants $d(M_\tau)$ are 9, 36, 57, 72, 81, 84, 81, 72, 57, 36, 9. Note that there are isometries between the lattices $M_{5}$, $M_{6}$, $M_{7}$, $M_{8}, M_{9}$, and  $M_{15}$, $M_{14}$, $M_{13}, M_{12}$, $M_{11}$ respectively (see \cite[Remark 7.7]{yang2021lattice}). Thus, next, we may only consider $M_{\tau}$ for $\tau= \ 10,\ 11,\ 12,\ 13,\ 14,\ 15$.

We check whether  $\C_{M_{\tau}}$ is irreducible. 
For $\tau =5$, the discriminant $d(M_{5})$ is squarefree, so there are no proper finite overlattices of $M_{5}$.
For the remaining cases, we can take $h^2$ and $D$ as a part of a basis of an overlattice $B$ of $M_{\tau}$ and complete it with $V$ such that $V=x h^2 +y D +zT$ with $x,y,z \in \Q$.\\
We can take $z=\frac{1}{n}$, for some $n \in \Z$ and $x'=n x, y'=y n \in \Z$; then $V=\frac{1}{n}(x' h^2+y' D+T)$. By adding multiples of $h^2$ and $D$, we may ensure that $0 \leq x',y'<n$.\\ Note that $n=[B:M_{\tau}]=[B_{prim}:M_{\tau, prim}]$, with  $B_{prim}$ the finite proper overlattice of $M_{\tau,prim}$ (this follows from standard lattice theory).

 Computing intersections:
 \begin{eqnarray*}
(V,h^2) &=&\frac{1}{n}(3x'+5y'+6)= a_1, \\
(V,D) &=& \frac{1}{n}(5x'+13y'+\tau) = b_1,\\
(V,V) &=& \frac{1}{n^2}(3(x')^2+13(y')^2+12 x'+2\tau y'+10x' y'+18)= c_1.
 \end{eqnarray*}
Then $B$ has the following Gram matrix:
 \begin{center}
                            \bordermatrix{
& h^2 & D & V\cr
h^2 & 3 & 5 & a_1 \cr
D & 5 & 13 & b_1\cr
V& a_1 & b_1 & c_1\cr
}.

 \end{center}

 Now we check each case separately for possible values of $\tau , n, x'$ and $y'$.\\
 For ($\tau=10,\ n=2,\ x'=1,\ y'=1$), $B$ has the following Gram matrix
 \begin{center}
          $\begin{pmatrix} 3&5 & 7\\5&13&14\\7&14 & 19  \end{pmatrix}$, 
     \end{center}
    of discriminant $21$.
         The vectors $(-5,3,0)$ and $(-7,0,3)$ form a basis for $B_{prim}$ which has the following Gram matrix:
      \begin{center}
          $\begin{pmatrix} 42 & 21\\21&33  \end{pmatrix}$.
     \end{center}
   Since $B_{prim}$ is not even, no such overlattices exist and this component is irreducible.\\
    No overlattices exist in the following cases.\\
    For ($\tau=11,\ n=3,\ x'=2,\ y'=0$), $B$ has the following Gram matrix
  \begin{center}
          $\begin{pmatrix} 3&5 & 4\\5&13&7\\4&7 & 6  \end{pmatrix}$, 
     \end{center}
    which has a root $ (-3,1,1)$. \\
 For ($\tau=12,\ n=2,\ x'=1,\ y'=1$), $B$ has the following Gram matrix
  \begin{center}
          $\begin{pmatrix} 3&5 & 7\\5&13&15\\7&15 & 20  \end{pmatrix}$, 
     \end{center}
    which has roots $\pm (-1,-1,1)$.\\
     For ($\tau=12,\ n=3,\ x'=0,\ y'=0$), $B$ has the following Gram matrix
  \begin{center}
          $\begin{pmatrix} 3&5 & 2\\5&13&4\\2&4 & 2  \end{pmatrix}$, 
     \end{center}
    which has roots $\pm (0,0,1)$.\\
     For ($\tau=12,\ n=6,\ x'=3,\ y'=3$), $B$ has the following Gram matrix
  \begin{center}
          $\begin{pmatrix} 3&5 & 5\\5&13&11\\5&11 & 10  \end{pmatrix}$, 
     \end{center}
    which has a root $ (-1,-1,2)$.\\
     For ($\tau=14,\ n=2,\ x'=1,\ y'=1$), the Gram matrix of $B$ is:
  \begin{center}
          $\begin{pmatrix} 3&5 & 7\\5&13&16\\7&16 & 21  \end{pmatrix}$.
     \end{center}
    $B$ has roots such as $(0,-1,1)$. \\
      For ($\tau=14,\ n=3,\ x'=2,\ y'=0$), the Gram matrix of $B$ is:
  \begin{center}
          $\begin{pmatrix} 3&5 & 4\\5&13&8\\4&8 & 6  \end{pmatrix}$.
     \end{center}
    $B$ has roots such as $(1,1,-2)$. \\
      For ($\tau=14,\ n=6,\ x'=5,\ y'=3$), the Gram matrix of $B$ is:
  \begin{center}
          $\begin{pmatrix} 3&5 & 6\\5&13&13\\6&13 & 14  \end{pmatrix}$.
     \end{center}
    $B$ has roots such as $(-2,0,1)$. \\
     For ($\tau=15,\ n=3,\ x'=0,\ y'=0$), $B$ has the following form
  \begin{center}
          $\begin{pmatrix} 3&5 & 2\\5&13&5\\2&5 & 2  \end{pmatrix}$, 
     \end{center}
    which has roots $\pm (0,0,1)$. \\

For all other possible values $\tau , n, x'$ and $y'$, the Gram matrices of $B$ are not in $\mathcal{M}_{3,3}(\Z)$.

The lattice $M_{\tau}$ is saturated definite positive of rank 3 such that: 
 \begin{center}
    $h^2 \in M_\tau \subset M(X) \subset L$. 
 \end{center}
Let $v = x h^2+ y S_{14}+ z T \in M_{\tau}$. For $x, y, z \in \Z$, we get
 \begin{center}
     $(v,v) = 3x^2 + 10y^2 + 18z^2 + 8x y + 12x z +2\tau y z$.
 \end{center}
 We prove that $\C_{M_{15}}$ is empty by finding roots. Indeed, $(-h^2 - D + T,- h^2 -D + T)=2$;  for the remaining values of $\tau$,
 $M_{\tau}$ has no vectors $v$ such that $(v,v) = 2$. Hence, $\C_{M_{\tau}} \subset \C$ is irreducible nonempty (see \cite[Lemma 2.4]{2019arXiv190501936Y}). 
 
\end{proof}

In the rest of this section we will consider cubics living in (at least) codimension 1 loci inside $\C_{18}$, and hence the existence of the sextic del Pezzo fibration is not guaranteed by \cite{2016arXiv160605321A}. For some of these irreducible loci we will find explicit examples of cubics which are birational to fibrations in  (quartic, quintic) del Pezzo surfaces over $\P^2$. This implies that the generic cubic has the same feature. For the loci where we do not show explicit examples, we tacitly assume that the generic cubic is birational to a sextic del Pezzo fibration.

\begin{definition}
Let $F\in M(X)$ be the class $4h^2-T$. We will call a \it 1-sectional (resp. 2 or 3-sectional) cycle \rm a dimension 2 algebraic cycle $W\in M(X)$ such that we have $(W,F)=1$ (resp. 2 or 3).
\end{definition}
      
Let us assume that the class $F=4h^2-T \in M(X)$ gives a two-dimensional linear system of sextic del Pezzo surfaces. Under this assumption, we now check the existence of 1,2 and 3-sectional cycles in $M(X)$ for the corresponding del Pezzo fibration on each component of $\C_{18} \cap \C_{14}$. Let $W_{a,b,c} \in M(X)$ be a cycle such that
\begin{center}
    $W_{a,b,c} := a h^2 + bD +c T$, for $a,b,c \in \Z$,
\end{center}
with $D$ a (degeneration of) quintic del Pezzo surface  and  $T$ an elliptic ruled surface (or a degeneration of such surface).\\
We have that $(W_{a,b,c},F) = 6a + (20-\tau)b +6c$. We will now check, for the possible values of $\tau$, the existence of a 1, 2 or 3-sectional cycles.

\smallskip

In the following table, we collect our results about the existence of sectional cycles for the different components of $\C_{18} \cap \C_{14}$. The symbol $\emptyset$ means that there are no sectional cycles of that given type.

\medskip
\begin{center}
\begin{tabular}{|c|c|c|c|}\hline
$\tau $&$(W_{a,b,c},F)=1$ &$(W_{a,b,c},F)=2$ & $(W_{a,b,c},F)=3$\\
\hline
 10 &$\emptyset$& $W_{0,2,-3}$&$\emptyset$\\
\hline
11&$\emptyset$&$\emptyset$&$W_{0,1,-1}$\\
\hline
12&$\emptyset$&$W_{0,1,-1}$&$\emptyset$\\\hline
13&$W_{0,1,-1}$&$W_{0,2,-2}$&$W_{0,3,-3}$\\\hline
14&$\emptyset$& $\emptyset$& $\emptyset$\\\hline

\end{tabular}
\end{center}

\begin{remark}\label{remarks}

\hspace{1cm}

\begin{enumerate}

\item We observe that, by Prop. \ref{ouch}, the del Pezzo fibrations for $\tau=11,\ 14$ have no rational section, hence - by Prop. \ref{equirat} - $\tilde{X}$ in these cases is not rational on $\mathbb{C}(\P^2)$ but nevertheless rational since the cubic is contained in $\C_{14}$. \\

\item Again by Prop. \ref{equirat}, this means that, if one of these cubics is good, then $\beta_2$ and $\beta_3$ are not both trivial for cubics in these components. Since by \cite[Thm. 6]{2016arXiv160605321A} there is always a rational 3-section of the corresponding del Pezzo fibration, then by Prop. \ref{kuzsections} the class $\beta_3$ is always trivial. This means that the class $\beta_2$ is nontrivial for good cubics in this component.\\

\item Of course, the existence of a 1-sectional cycle is the strongest condition, and implies the existence of 2 and 3-sectional cycles.\\

\item For the components with $\tau=10$ and 12 it is worth observing that, by Prop. \ref{ouch}, the associated del Pezzo sextic fibrations have a rational section, though this does not come from a 1-sectional cycle in the cubic. It is nevertheless not hard to construct a rational section of the del Pezzo fibration, starting from the strict transform of a 2-sectional cycle, in the spirit of the proof of \cite[Prop. 2.3]{ouc21}.

\end{enumerate}

\end{remark}

The upshot  is that for cubics in the component obtained with $\tau=11, 14$, if they are birational to a fibration in del Pezzo sextics over $\P^2$, the rationality of the del Pezzo surfaces over the function field of $\P^2$ is not necessary for rationality, and these examples are not contained in the codimension 2 loci of rational cubics described in \cite{2016arXiv160605321A}. 

All the explicit examples of cubics $X$ in the intersection $\C_{18} \cap \C_{14}$ that we were able to construct with Macaulay2 contain not only $T$, but also one of or both the disjoint planes $P$ and $P'$ that are residual to $T$ in the complete intersection of the three quadrics (see Subsection \ref{subsection 3.1}). The consequence is that they are indeed birational to a del Pezzo fibration, but of lower degree. We will now give a theoretical description of these loci, where 
our explicit examples live.

\smallskip

In particular, let us consider the sublocus of $ \C_{18} \cap \C_{14}$ parametrizing cubics containing the elliptic ruled surface $T$, $P$, $P'$ and a del Pezzo surface that intersects both $P$ and $P'$ along a smooth conic. In fact, a cubic $X$ containing $T$ and $D$ automatically contains $P$ and $P'$, since $X\cap P$ (resp. $X\cap P'$) contains a plane cubic and a conic, hence $P\subset X$ (resp. $P'\subset X$). 
This locus has codimension 4 in the moduli space, and its generic element has $M(X)$ of rank 5. Such cubics can be found following instructions in the attached file using Macaulay2. Moreover, an explicit example is given in Section 5. 
Recall that the two planes intersect $T$ along a cubic curve.

\begin{prop}
Let $P$ and $P'$ be two disjoint planes in $\P^5$, $T$ the elliptic ruled surfaces obtained from $P$ and $P'$, and $D$ a del Pezzo surface intersecting each of the planes in a smooth conic.
    Let $X \in \C_{18} \cap \C_{14}$ be a cubic containing $T,\ P,\ P'$ and $D$. Then $X$ has a sublattice $M_{\tau}:=\langle h^2,D, P,P',T \rangle \subset M(X)$ of rank 5 whose bilinear form has the following Gram matrix: \begin{center}
 \bordermatrix{
 & h^2 & D & P&P'& T\cr
h^2 & 3 & 5 &1&1& 6\cr
D & 5 & 13 & 0&0& \tau \cr
P & 1 & 0&3&0&0\cr
P' & 1 & 0 & 0&3&0 \cr
T & 6 & \tau &0&0& 18\cr},

\end{center}
for some $\tau=(D,T) \in \Z$ depending on $X$.
\end{prop}
\begin{proof}
    
     Let $S_1$, $S_2$ be two smooth surfaces in a smooth cubic hypersurface such that the scheme-theoretic
intersection $S_1 \cap S_2$ contains a smooth curve $C$ of degree $d$ and genus $g$. Then the multiplicity of intersection
of $S_1$ and $S_2$ along $C$ is given (see \cite{c14}) by:
$$mult _C(S_1,S_2) = 3d + K_{S_1}.C + K_{S_2}.C + 2- 2g$$
where $K_{S_i}$
denotes the canonical class of $S_i$.\\
In our case, we have that $K_P=K_{P'}=-3H$ and $K_D=-H$, with $H$ the hyperplane class. Since the quintic del Pezzo surface $D$ and the two disjoint planes intersect in a smooth conic curve $C$ of genus 0, we have the following:$$mult _C(D,P)=mult _C(D,P') = 3.2 -3.2-2+2=0.$$
We do the same in order to compute the intersection $(T,P)=(T,P')$. Since $T$ and the plane intersect in a curve $C'$ of degree 3 and genus 1. The formula gives us:$$mult _{C'}(T,P)=mult _{C'}(T,P') = 3.3 -3.3+2-2=0.$$

\end{proof}

\smallskip
We will denote by $\C_{M_{\tau}} \subset \C$ the locus of cubic fourfolds such that there is a primitive embedding $M_{\tau} \subset M(X)$ of lattices preserving $h^2$.

\begin{thm} \label{C18/14'}
The locus of smooth cubic fourfolds associated to the the following rank 5 Gram matrix 

\begin{center}
 \bordermatrix{
 & h^2 & D & P&P'& T\cr
h^2 & 3 & 5 &1&1& 6\cr
D & 5 & 13 & 0&0& \tau \cr
P & 1 & 0&3&0&0\cr
P' & 1 & 0 & 0&3&0 \cr
T & 6 & \tau &0&0& 18\cr}\end{center}
 with $\tau=(D,T)$, has exactly 3 irreducible components $\C_{M_{12}}$, $\C_{M_{13}}$ and $\C_{M_{14}}$.  Moreover, the associated discriminants $d(M_{12}), \ d(M_{13})$ and $d(M_{14})$ are 108, 123 and 96 respectively.  

\end{thm}

\begin{proof}

Since  $M_{\tau}$ is defined as a sublattice of a positive definite lattice, it must have positive discriminant by Sylvester's criterion. We find the set of possible values of $\tau$ for which $d(M_{\tau}) > 0$. 

As $d(M_{\tau})=-21\tau^2+540\tau-3348$, the only values of $\tau$ making a positive discriminant are $\tau \in \{11,\ 12,\ 13,\ 14,\ 15\} $. For these values of $\tau$, $M_{\tau}$ is a positive definite sublattice of rank 5 containing $h^2$ with discriminants $d(M_\tau)$ equal to 51, 108, 123, 96, 27 respectively.\\
We now prove that $M_{\tau}$ is saturated for the possible values of $\tau$. 

We will prove that no proper finite overlattices exist; for $\tau =11$ the discriminant is squarefree, so there are no proper finite overlattices.
For the remaining cases, we can take $h^2$, $D$, $P$ and $P'$ as a part of a basis of an overlattice $B$ and complete it with $V$ such that $V=x h^2 +y D +zP+tP'+uT$ with $x,y,z,t,u \in \Q$. Consider the natural embedding of $M_{\tau}$ in B that can be written as follows:

\begin{center}
   $ \begin{pmatrix}
    1&0&0&0&x\\
    0&1&0&0&y\\
    0&0&1&0&z\\
    0&0&0&1&t\\
    0&0&0&0&u\\
    \end{pmatrix}^{-1}= \begin{pmatrix}
    1&0&0&0&-x/u\\
    0&1&0&0&-y/u\\
    0&0&1&0&-z/u\\ 
    0&0&0&1&-t/u\\ 
    0&0&0&0&1/u\\ 
    \end{pmatrix} \in \mathcal{M}_{5,5}(\Z)$.
\end{center}
We can take $u=\frac{1}{n}$, for some $n \in \Z$ and $x'=n x,\ y'=y n,\ z'=z n, \ t'=t n \in \Z$; then $V=\frac{1}{n}(x' h^2+y' D+ z'P+t'P'+T)$. By adding multiples of $h^2$, $D$, $P$ and $P'$, we may ensure that $0 \leq x',y',z',t'<n$. 
 Computing intersections, we have the following:
 \begin{eqnarray*}
(V,h^2) &=&\frac{1}{n}(3x'+y'+z'+t'+6)= a, \\
(V,D) &=& \frac{1}{n}(5x'+13y'+\tau) = b,\\
(V,P) &=& \frac{1}{n}(x'+3z') = c,\\
(V,P') &=& \frac{1}{n}(x'+3t') = d,\\
(V,V) &=& \frac{1}{n^2}(3(x')^2+13(y')^2+3(z')^2+3(t')^2+10x'y'+2y'z'+2x't'+12 x'\\&&+2\tau y'+18)= e.
 \end{eqnarray*}
Then the Gram matrix of $B$ is:
 \begin{center}
\bordermatrix{
& h^2 & D &P&P'& V\cr
h^2 & 3 & 5 & 1&1&a \cr
D & 5 & 13 &0&0&b \cr
P & 1 & 0 & 3&0&c\cr
P' & 1 & 0&0&3 & d\cr
V& a & b &c&d& e\cr
}.

 \end{center}

 Now we check each case separately for possible values of $\tau , n, x'$, $y' \ z'$ and $t'$. The cases mentioned down below correspond to the ones such that $B \in \mathcal{M}_{5,5}(\Z)$. For all other possible values $\tau, n, x', y', z'$ and $t'$, the Gram matrices of $B$ 
 are non-integers.\\
 
 For ($\tau=12,\ n=3,\ x'=0,\ y'=0,\ z'=0,\ t'=0 )$, $B$ has the following Gram matrix
 \begin{center}
          $\begin{pmatrix} 3&5 &1&1&2\\
          5&13&0&0&4\\
          1&0&3&0&0&\\
          1&0&0&3&0&\\
          2&4&0&0 & 2  \end{pmatrix}$, 
     \end{center}
    which has a primitive short root $\pm (2,0,-1,-1,-2)$.  \\
 For ($\tau=15,\ n=3,\ x'=0,\ y'=0,\ z'=0,\ t'=0$), $B$ has the following Gram matrix
  \begin{center}
    $\begin{pmatrix} 
3&5 &1&1&2\\
5&13&0&0&5\\
1&0&3&0&0&\\
1&0&0&3&0&\\
2&5&0&0 & 2  \end{pmatrix}$, 
     \end{center}
    which has primitive roots $\pm (1,0,0,-1,-1)$.\\    
Therefore, by \cite[\S 4 Proposition 1]{MR2390291},  no overlattices exist in these two cases.\\

 Thus, for $\tau=11,\ 12,\ 13,\ 14,\ 15$,  $M_{\tau}$ are saturated.\\

The saturated lattice $M_{\tau}$ is definite positive of rank 5 such that: 
 \begin{center}
    $h^2 \in M_\tau \subset M(X) \subset L$. 
 \end{center}
Let $v = x h^2+ yD+ zP+tP'+uT \in M_{\tau}$. For $x, y, z, t, u \in \Z$, we get
 \begin{center}
     $(v,v) = 3x^2 + 13y^2 + 3z^2 +3t^2+18u^2+ 10x y + 2x z +2xt+12xt+2\tau y u$.
 \end{center}
 We prove that $\C_{M_{11}}$ and $\C_{M_{15}}$ are empty by finding roots. Indeed,  $(5 h^2 - D -2P-2P'- T,5 h^2 - D -2P-2P'- T)=2$ is a short root for $\C_{M_{11}}$ and $( h^2+D- T , h^2 +D-T)=2$ is a short root for $\C_{M_{15}}$;  for the remaining values of $\tau$,
 $M_{\tau}$ has no vectors $v$ such that $(v,v) = 2$. Hence, by \cite[Lemma 2.4]{2019arXiv190501936Y}$, \C_{M_{\tau}} \subset \C$ is irreducible and nonempty for $\tau= 11,\ 12,\ 13$.
 
\end{proof} 

\begin{remark}
    In Section 5 we give an example of cubic with $\tau=12$. Examples with $\tau=13, 14$ are easy to construct in the same way, and this is explained in the ancillary file. 
\end{remark}

\begin{prop} \label{Dp4}
      Let $X$ be a generic cubic fourfold in $\C_{M_{12}}$, $\C_{M_{13}}$ or $\C_{M_{14}}$. Then the full linear system $|2h-T -P -P'|$ defines a fibration in del Pezzo quartics over $\P^2$.
\end{prop}

 \begin{proof}
The three quadrics that define $|2h-T|$ automatically contain also $P$ and $P'$, since $T$ intersects the two planes along cubic curves. This means that $|2h-T -P -P'|= |2h-T|$. Clearly the fibers have degree 4 and it is easy to see that the anticanonical bundle of a fiber $F$ is $\mathcal{O}_F(1)$. In order to show the smoothness of the generic fiber, we can construct explicit examples with at least one smooth fiber (and hence the generic fiber is smooth as well) inside each component of Thm \ref{C18/14'} (see \S \ref{section 5}, or the attached file for general method). This in turn implies that for the generic cubic in the three components, the generic quartic fiber is smooth. We can even compute that the fibers are (generically) smooth intersections of two quadrics in $\P^4$.
 \end{proof}

Let $F\in M(X)$ be the class $4h^2-P-P'-T$ of the fiber, a quartic del Pezzo surface. Let $W \in M(X)$ be a $2-$cycle on $X$ such that
\begin{center}
    $W := a h^2 + b D+c P+dP'+e T$, for $a,b,c,d,e \in \Z$
\end{center}

We have that $(W,F) = -5a + b+(20-\tau)c +d+6e$. We observe that a rational
$1-$section exists always (i.e. $(W,F)=1$) for cubics $\C_{M_{12}}$, $\C_{M_{13}}$ or $\C_{M_{14}}$. Recall that, if a quartic del Pezzo surface has $k-$rational points, then it is unirational over $k$. In our case, the quartic del Pezzo surface is unirational over the function field of $\P^2$ and hence unirational over $\mathbb{C}$. This is not surprising since the quartic del Pezzo fibration is birational to the cubic fourfold which is known to be unirational.

\smallskip

As far as the author knows, only one sufficient condition is known for the rationality of a del Pezzo quartic over an arbitrary field. If a quartic del Pezzo surface contains a line defined over the base field then it is rational (see \cite[Proposition 5.4]{ABB}).

We cannot exclude that a line defined over the base field inside the del Pezzo fibration exists, but we can give some evidence for its non existence.

\begin{remark}

The existence of a line over $\P^2$, contained in the quartic del Pezzo fibration birational to the cubic $X$ is equivalent to saying that there exists an open subset $U\subset\P^2$, and a $\P^1$-bundle $W$ over $U$ contained in the family of quartic surfaces over the same open set $U$. Let us show that, under some hypotheses, this $\P^1$-bundle cannot be contained in the cubic fourfold. Suppose that $W$ is contained in the cubic $X$ (i.e. it does not come from the exceptional divisors of the base locus of the linear system $|2h-T|$) and consider the closure $\mathcal{W}$ of $W$ in $X$. The Chow group $CH^1(X)=\Z=\langle h\rangle$ is infinite cyclic, generated by the 
hyperplane section. Hence the cycle $\mathcal{W}$ is a complete intersection of type $(3,d)$. We claim that, if a $(3,d)$ complete intersection has canonical singularities, then it cannot contain a 2-dimensional family of lines. 
The case $(3,2)$ is classical and known since Fano. If $d\geq 3$, then the canonical bundle of the complete intersection is effective, and if the C.I. contained the family of lines, it would be uniruled, which is absurd. Of course if the complete intersection is highly singular, then the canonical class can be negative and this argument does not work.
    
\end{remark}

\begin{remark} \label{quartic/quintic}
\begin{enumerate}

\item It is not hard to construct cubic fourfolds in $\C_{18} \cap \C_{14}$ containing only one plane. We can choose the same $P$, and a second disjoint plane $P''$ such that $P''\cap D=\emptyset$ (see Section \ref{section 5} for details). %
This means that in this case we consider the locus of smooth cubic fourfolds whose lattice of 2-cycles have a sublattice as follows

\begin{center}
 \bordermatrix{
 & h^2 & D & P& T\cr
h^2 & 3 & 5 &1& 6\cr
D & 5 & 13 & 0& \tau \cr
P & 1 & 0&3&0\cr
T & 6 & \tau &0& 18\cr},\end{center}
where $\tau=(D,T)$. A calculation similar to the previous ones shows that this locus has exactly 6 irreducible components $\C_{M_{\tau}}$ for $\tau \in \{9,10,11,12,13,14\}$. An argument similar to Proposition \ref{Dp4}, shows that these components give rise to fibrations in quintic  surfaces over $\P^2$, given by $F'=4h^2-P-T$, with ample anticanonical bundle. We construct an explicit example in the component $\tau=12$ for which the generic fiber is smooth, hence a quintic del Pezzo surface. Also in this case, one can check that the fibration has a rational section. This implies rationality. In fact, a classical theorem by Enriques states that a del Pezzo quintic defined over an infinite field $k$ is always rational over $k$.

\item Despite many attempts, we were unable to produce an example of a cubic fourfold in the intersection of $\C_{18}$ and $\C_{14}$ not containing any plane (i.e. with $rk(M(X))=3$). It is likely that non-generic cubics of this kind have a different geometric construction.

\end{enumerate}

\end{remark}

\smallskip

\subsection{\texorpdfstring{$\C_{18} \cap \C_{26}$}{C18 \textbackslash cap C26}}\label{subsection 3.3}

Let $X$ be a cubic fourfold in $\C_{18} \cap \C_{26}$;  $M(X)$ has primitive sublattices $K_{18}: =\langle h^2,T \rangle$ and $K_{26}:=\langle h^2,S_{26} \rangle$, such that $h^2$ is the square of the hyperplane class, $T$ is (possibly a degeneration of) an elliptic ruled surface and $S_{26}$ is a surface contained in a general element of $\C_{26}$ (as defined in \S \ref{C826}).
 Hence $X \in \C_{18} \cap \C_{26}$ has a sublattice $M_{\tau}:=\langle h^2, T, S_{26}\rangle \subset M(X)$ with Gram matrix:

\begin{center}
     \bordermatrix{
 & h^2 & T& S_{26} \cr
h^2 & 3 & 6 & 7\cr
T & 6 & 18 & \tau \cr
S_{26} & 7 & \tau & 25\cr
},
\end{center}
for some $\tau \in \Z$ depending on $X$.

\begin{thm} \label{C18/26}
The intersection $\C_{18} \cap \C_{26}$ has exactly seven irreducible components $\C_{M_{\tau}}$ given by the following Gram matrix

\begin{center}
$M_{\tau}:=$ \bordermatrix{
 & h^2 & T & S_{26}\cr
h^2 & 3 & 6 & 7\cr
T & 1 & 18 & \tau \cr
S_{26} & 7 & \tau & 25\cr
} or
$B_{12}:=\begin{pmatrix}
 3 & 6 & 8 \\
 6 & 18 & 18  \\
 8 & 18 & 24\\
 \end{pmatrix}$
 \end{center}
where $\tau \in  \{8,9,10,11,12,13,14\}$ and $B_{12}$ is an overlattice of $M_{12}$. Moreover the corresponding discriminants of $M_{8},..,M_{14}$ and $B_{12}$ are 48, 81, 108, 129, 144, 153, 156 and 36 respectively.

\end{thm}

\begin{proof}

The cubic fourfold $X \in\C_{18} \cap \C_{26}$ has a sublattice $M_{\tau}:=\langle h^2, T, S_{26} \rangle \subset M(X)$ with the Gram matrix:

\begin{center}
 \bordermatrix{
 & h^2 & T & S_{26} \cr
h^2 & 3 & 6 & 7\cr
T & 6 & 18 & \tau \cr
S_{26} & 7 & \tau & 25\cr
}

\end{center}
For some $\tau \in \Z$ depending on $X$.\\
As $d(M_{\tau})=-3(\tau^2-28\tau +144)$, the only values making a positive discriminant are $\tau \in \{7,8,9,10,11,12,13,14,15,16,17,18,19,20,21\} $.\\

  Note that there are isometries between the lattices $M_{7}, M_{8}$, $M_{9}$, $M_{10}$, $M_{11}$, $M_{12}$, $M_{13}$  and $M_{21}$, $M_{20}$, $M_{19}$, $M_{18}$, $M_{17}$, $M_{16}$, $M_{15}$ respectively (see \cite[Remark 7.7]{yang2021lattice}). Thus, we may consider only $M_{\tau}$ for $\tau=7,\ 8,\ 9,\ 10,\ 11,\ 12,\ 12,\ 13,\ 14$ of discriminant $d(M_\tau)$ respectively 9, 48, 81, 108, 129, 144, 153, 156.\\
  For these values of $\tau$, no overlattices exist for $M_{\tau}$ except for $(\tau=12;n=2; x'=1; y'=1)$, an overlattice $B_{12}$ exists given by  \begin{center}
 $B_{12}:=\begin{pmatrix}
 3 & 6 & 8 \\
 6 & 18 & 18  \\
 8 & 18 & 24\\
 \end{pmatrix}$

\end{center}
 of discriminant 36.\\
  For these values of $\tau$, $M_{\tau}$ (or $B_{12}$) is a positive definite saturated sublattice of rank 3: 
 \begin{center}
    $h^2 \in M_\tau \subset M(X) \subset L$ 
 \end{center}
 Furthermore, let $v = x h^2+ y T+ z S_{26} \in M_{\tau}$ $x, y, z \in \Z$, we get
 \begin{center}
     $(v,v) = 3x^2 + 18y^2 + 25z^2 +12x y+ 14x z + 2\tau y z$. 
 \end{center} 
$M_{7}$ has roots $\pm (5h^2+T+S_{26})$. For the remaining values of $\tau$, there exists no $v \in M_{\tau}$ such that $(v,v) = 2$.
  Then for these values of $\tau$, $\C_{M_{\tau}} \subset \C$ is irreducible nonempty and has codimension 2 (see \cite[Lemma 2.4]{2019arXiv190501936Y}).
 \end{proof}

Let us assume that the class $4h^2-T \in M(X)$ gives a two-dimensional linear system of sextic del Pezzo surfaces. Under this assumption, we now check the existence of 1,2 and 3-sectional cycles in $M(X)$ for the corresponding del Pezzo fibration. 

For $W_{a,b,c}$ a cycle in $M(X)$ such that
\begin{center}
    $W_{a,b,c} = a h^2 + b T +c S_{26}$, for $a,b,c \in \Z,$
\end{center}
 we have that $(W_{a,b,c},F) = 6a + 6b +(28-\tau)c$.

\smallskip
The following table resumes the situation, according to the different values of $\tau$.

\medskip

\begin{center}
\begin{tabular}{|c|c|c|c|}

\hline
$\tau $ &$(W_{a,b,c},F)=1$ & $(W_{a,b,c},F)=2$ & $ (W_{a,b,c},F)=3$\\
 \hline
8&$\emptyset$ &$W_{0,7,-2}$& $\emptyset$\\\hline 
9&$W_{0,16,-5}$ &$W_{0,13,-4}$& $W_{0,10,-3}$\\\hline
10 &$\emptyset$&$\emptyset$& $\emptyset$\\\hline
11&$W_{0,3,-1}$&$W_{0,6,-2}$&$W_{0,9,-3}$\\\hline
12&$\emptyset$&$W_{0,3,-1}$&$\emptyset$\\\hline
13&$\emptyset$&$\emptyset$&$W_{0,3,-1}$\\\hline
14&$\emptyset$& $W_{0,5,-2}$& $\emptyset$\\\hline

\end{tabular}
\end{center}

\medskip
 
All the observations from Rem. \ref{remarks} hold true also in this case. The components with $\tau=10,\ 13$ parametrize rational cubic fourfolds in $\C_{18}$ such that the associated del Pezzo fibration has no rational section. If the cubics are good, the Brauer class $\beta_2$ is nontrivial.

 \subsection{\texorpdfstring{$\C_{18} \cap \C_{38}$}{C18 \textbackslash cap C38}}\label{subsection 3.4}
 
Let $X$ be a cubic fourfold in $\C_{18} \cap \C_{38}$; $M(X)$ has primitive sublattices $K_{18}: =\langle h^2, T \rangle$ and $K_{38}:=\langle h^2, S_{38} \rangle$, such that $T$ is (a degeneration of) an elliptic ruled surface and $S_{38}$ is as defined in \S \ref{subsection 4.3}.\\
Hence $X \in \C_{18} \cap \C_{38}$ has a sublattice $M_{\tau}:=\langle h^2, T, S_{38}\rangle \subset M(X)$ with Gram matrix:

\begin{center}
     \bordermatrix{
 & h^2 & T& S_{38} \cr
h^2 & 3 & 6 & 10\cr
T & 6 & 18 & \tau \cr
S_{38} & 10 & \tau & 46\cr
},
\end{center}
for some $\tau \in \Z$ depending on $X$.

\begin{thm} \label{C18/38}
The irreducible components of $\C_{18} \cap \C_{38}$ are the subvarieties of codimension two $\C_{M_{\tau}}$  given by rank 3 lattices represented by 

  \begin{center}
        $M_{\tau}:=  \begin{pmatrix} 3&6& 10\\6&18&\tau\\10&\tau & 46 \end{pmatrix}$  
     or 
       $B_{16}:=  \begin{pmatrix} 3&6& 8\\6&18&17\\8&17 & 24 \end{pmatrix}$ 
    or
       $B_{20}:=  \begin{pmatrix} 3&6& 8\\6&18&19\\8&19 & 26 \end{pmatrix}$ 
        
     \end{center}
     where  $12 \leq  \tau \leq 20$, $B_{16}$ and $B_{20}$ are overlattices of $M_{16}$ and $M_{20}$ respectively. Moreover the associated discriminants $d(M_{12}),..,d(M_{20}), d(B_{16})$ and $d(B_{20})$ are 36, 81, 120, 153, 180, 201, 216, 225, 228, 45, and 57 respectively.
\end{thm}

\begin{proof}
As $d(M_{\tau})=-3(\tau^2-40\tau +324)$, the only values of $\tau$ making a positive discriminant are $\tau \in \{12,13,...,28\} $.   Note that there are isometries between the lattices $M_{12}$, $M_{13}$, $M_{14}$, $M_{15}$, $M_{16}$, $M_{17}$, $M_{18}$, $M_{19}$ and $M_{28}$, $M_{27}$, $M_{26}$, $M_{25}$, $M_{24}$, $M_{23}$, $M_{22}$, $M_{21}$ respectively (see \cite[Remark 7.7]{yang2021lattice}). Thus, we may consider only $M_{\tau}$ for $\tau= 12,\ 13,\ 14,\ 15,\ 16,\ 17,\ 18,\ 19,\ 20$. The corresponding discriminants $d(M_\tau)$ are 36, 81, 120, 153, 180, 201, 216, 225, 228.\\
Overlattices exist only for $\tau=16$ and $20$ such that :  \begin{center}
    $B_{16}:=  \begin{pmatrix} 3&6& 8\\6&18&17\\8&17 & 24 \end{pmatrix}$  
        and 
        $B_{20}:=  \begin{pmatrix} 3&6& 6\\8&18&19\\8&19 & 26 \end{pmatrix}$.
        \end{center}
For each of these values of $\tau$, $M_{\tau}$ (as well as $B_{16}$ or $B_{20}$) is a positive definite saturated sublattice of rank 3 containing $h^2$. 

Let $v = x h^2+ y T+ z S_{38} \in M_{\tau}$, for $x, y, z \in \Z$, we get
 \begin{center}
     $(v,v)= 3x^2 + 18y^2 + 46z^2 +12xy+ 20x z + 2\tau y z$.
 \end{center}
There exists no $v \in M_{\tau}$ such that $(v,v) = 2$; thus $\C_{M_{\tau}} \subset \C$ is a nonempty irreducible component and has codimension 2 (see \cite[Thm 5.2]{yang2021lattice}). 

\smallskip
 
\end{proof}
We now look for 1,2 and 3-sectional cycles inside cubics in the irreducible components of $\C_{18}\cap\C_{38}$, assuming that the linear system $|2h-T|$ defines a sextic del Pezzo fibration on $\P^2$.\\ For $W_{a,b,c}$ a cycle in $M(X)$ such that    $W_{a,b,c} = a h^2 + b T+c S_{38}$, for $a,b,c \in \Z$ and $F= 4h^2 - T\in M(X)$, we have that $$(W_{a,b,c},F) = 6a + 6b +(40-\tau)c.$$

\medskip

\begin{center}

\begin{tabular}{|c|c|c|c|}
\hline
$\tau$ &$ (W_{a,b,c},F)=1$ & $ (W_{a,b,c},F)=2$ & $ (W_{a,b,c},F)=3$\\ \hline
12 &$\emptyset$&$W_{0,5,-1}$& $\emptyset$\\\hline
13 &$\emptyset$& $\emptyset$&$W_{0,5,-1}$\\\hline 
14 &$\emptyset$& $W_{0,9,-2}$&$\emptyset$\\\hline 
15 &$W_{0,21,-5}$ &$W_{0,17,-4}$&$W_{0,13,-3}$\\\hline
16 &$\emptyset$& $\emptyset$&$\emptyset$\\
\hline
17&$W_{0,4,-1}$&$W_{0,8,-2}$&$W_{0,12,-3}$\\
\hline
18&$\emptyset$&$W_{0,4,-1}$&$\emptyset$\\\hline
19&$\emptyset$&$\emptyset$&$W_{0,4,-1}$\\\hline
20&$\emptyset$& $W_{0,7,-2}$& $\emptyset$\\\hline

\end{tabular}
\end{center}

Cubic fourfolds from the components with $\tau=13,\ 16,\ 19$ are examples of rational cubics in $\C_{18}$ s.t. the associated del Pezzo fibration has no rational section. If the cubics are good, then the Brauer class $\beta_2$ is non-trivial.

\section{Explicit examples}\label{section 5}

In this section, we shall give explicit examples of  rational cubic fourfolds which are fibered in quadric or del Pezzo surfaces. All our computations have been done using Macaulay2 \cite{M2}. We work over the finite field $\F_{3331}$ but our equations hold over fields of characteristic zero. Hence we will set $\P^5: =Proj(\F_{3331}[x_0,...,x_5])$ and $\P^2:=Proj(\F_{3331}[t_0,...,t_2])$.
\subsection{Cubic fourfold in $\C_{8}$} \label{subsection 5.2}
We provide first an example of rational cubic fourfold in $\C_{8}\cap\C_{38}$ containing a \textit{good} plane with nontrivial \textit{Brauer class} using \S \ref{subsection 4.3}. This is equivalent to the fact that the quadric fibration does not have a rational section.
Let $S_{38}$ be the smooth surface of degree 10 contained in a general element of $\C_{38}$ as defined in \S \ref{subsection 4.3}. It is given by the image of a plane via the linear system of curves of degree 10 with 10 general triple points (see \cite{MR3934590}).\\
Let $P$ be the plane whose ideal is generated by $P_1$, $P_2$ and $P_3$ as follows:
\begin{eqnarray*}
P_1&=&x_2+884x_3+1526x_4+99x_5,\\ 
P_2&=&x_1+363x_3+1053x_4+605x_5,\\
P_3&=&x_0+229x_3+1382x_4+1193x_5.
\end{eqnarray*}
One can compute that $P$ and $S_{38}$ intersect transversally in 2 points. They are contained in the cubic fourfold $Y$ cut out by \\
\begin{equation*}
\begin{aligned}
 C:= &\
x_0^3-559x_0^2x_1-647x_0x_1^2+501x_1^3+1640x_0^2x_2-878x_0x_1x_2-417x_1^2x_2+\\&1333x_0x_2^2-289x_1x_2^2+472x_2^3+103x_0^2x_3+792x_0x_1x_3+183x_1^2x_3-\\&1078x_0x_2x_3-514x_1x_2x_3+1030x_2^2x_3+886x_0x_3^2-727x_1x_3^2-1509x_2x_3^2+\\&609x_3^3-1146x_0^2x_4+1639x_0x_1x_4-397x_1^2x_4+744x_0x_2x_4-1035x_1x_2x_4+\\&174x_2^2x_4+3x_0x_3x_4-153x_1x_3x_4-239x_2x_3x_4+907x_3^2x_4-771x_0x_4^2-\\&1025x_1x_4^2+876x_2x_4^2+633x_3x_4^2-844x_4^3-505x_0^2x_5-889x_0x_1x_5-30x_1^2x_5+\\&822x_0x_2x_5-30x_1x_2x_5-10x_2^2x_5+159x_0x_3x_5+744x_1x_3x_5-851x_2x_3x_5+\\&1187x_3^2x_5+1473x_0x_4x_5-1372x_1x_4x_5-1106x_2x_4x_5+566x_3x_4x_5+957x_4^2x_5\\&+440x_0x_5^2-714x_1x_5^2+278x_2x_5^2-957x_3x_5^2
\end{aligned}
\end{equation*}\\
$Y$ is a smooth cubic hypersurface in $\P^5$ contained in the intersection $\C_{8}\cap\C_{38}$. Let $\tilde{Y}$ be the blow-up of $Y$ along $P$. The discriminant divisor $E \in \P^2$ of the map $q \colon \tilde{Y} \to \P^2 $ is a smooth sextic curve defined as follows:\\
\begin{equation*}
\begin{aligned}
E:&\ t_0^6+1046t_0^5t_1-804t_0^4t_1^2-887t_0^3t_1^3+1253t_0^2t_1^4+58t_0t_1^5+1280t_1^6-164t_0^5t_2+\\
&786t_0^4t_1t_2-674t_0^3t_1^2t_2+960t_0^2t_1^3t_2-574t_0t_1^4t_2-524t_1^5t_2 +783t_0^4t_2^2+\\&1353t_0^3t_1t_2^2-1607t_0^2t_1^2t_2^2-686t_0t_1^3t_2^2+491t_1^4t_2^2-379t_0^3t_2^3+706t_0^2t_1t_2^3-\\&602t_0t_1^2t_2^3-159t_1^3t_2^3+784t_0^2t_2^4+824t_0t_1t_2^4-854t_1^2t_2^4+448t_0t_2^5+1062t_1t_2^5\\&+760t_2^6=0
\end{aligned}
\end{equation*}\\
Hence, $Y$ is a smooth rational cubic fourfold containing a  \textit{good} plane $P$ inside the irreducible component of $\C_{8} \cap \C_{38}$ indexed by $\tau = (P,S_{38}) = 2$. By Corollary \ref{cor 4.6}, the quadric surface bundle associated to cubic fourfolds in this component has no rational section.
\subsection{Cubic fourfold in $\C_{18}$} \label{subsection 5.1}
We exhibit now examples of rational cubic fourfolds $X \in \C_{18} \cap \C_{14}$. The associated del Pezzo fibrations have  rational sections in these cases.
\subsubsection{A cubic fourfold with a quartic del Pezzo fibration}
\medskip
 We will now construct an example of cubic fourfold containing an elliptic ruled surface $T$, the two planes $\Pi_1$ and $\Pi_2$ that we use to construct $T$, and a del Pezzo quintic $D$ that intersects the two planes along smooth conics. From \S \ref{subsection 3.2}, we know the possible values of $\tau= (D,T)$ for cubic fourfolds of this kind. Our example will lie in the component with $\tau=12$, but the same method allows to construct examples in each component (see the ancillary file).\\Let $D$ be a quintic del Pezzo surface in $\P^5$ as defined in \cite[Section 4]{c14}.

To construct $T$, we define three quadrics $Q_1$, $Q_2$ and $Q_3$ as follows:
\begin{eqnarray*}
Q_1&=&x_1^2-884x_0x_2+1331x_1x_2-336x_2^2+538x_1x_3-895x_2x_3-538x_0x_4-\\&&1580x_1x_4+531x_2x_4-644x_4^2-1405x_0x_5-1650x_1x_5+1251x_2x_5-\\&&305x_3x_5-1097x_4x_5-131x_5^2,\\
Q_2&=&x_0x_1-871x_0x_2-512x_1x_2-1526x_2^2-1367x_1x_3-336x_2x_3-213x_0x_4-\\&&304x_2x_4-644x_3x_4+813x_0x_5-169x_1x_5-564x_2x_5+1316x_3x_5-41x_4x_5\\&&+99x_5^2,\\
Q_3&=&x_0^2-1020x_0x_2-102x_1x_2-948x_2^2-1580x_0x_3-1432x_1x_3+1471x_2x_3-\\&&644x_3^2+1432x_0x_4-500x_2x_4+1256x_0x_5-1559x_1x_5-1110x_2x_5\\&&+673x_3x_5-932x_4x_5+939x_5^2.
\end{eqnarray*}
Each of these quadrics contains the two disjoint planes:
\begin{center}
  $\Pi_1=\{x_2-1188x_5=x_1-1188x_4=x_0-1188x_3=0\}$ and $
\Pi_2=\{x_2-392x_5=x_1-392x_4=x_0-392x_3=0\}$
\end{center}
$T$ is obtained by saturating the ideal generated by $Q_1$, $Q_2$ and $Q_3$ with respect to the defining ideals of the planes $\Pi_1$ and $\Pi_2$. \\
The surfaces $T$ and $D$ are contained in the cubic fourfold $X$ cut out by
\begin{equation*}
\begin{aligned}
C':= &\ x_0^2x_1+1200x_0x_1^2+1052x_0x_1x_2-1200x_1^2x_2-1053x_1x_2^2+1516x_0x_1x_3\\&+1133x_1^2x_3+1218x_0x_2x_3+696x_1x_2x_3+333x_2^2x_3-1459x_1x_3^2-\\& 1145x_2x_3^2+235x_0^2x_4+1537x_0x_1x_4-1420x_0x_2x_4 +1093x_1x_2x_4 +\\& 688x_2^2x_4+815x_0x_3x_4+1301x_1x_3x_4-161x_2x_3x_4-1309x_0x_4^2-398x_2x_4^2\\&-1219x_0^2x_5-466x_0x_1x_5 +768x_1^2x_5-1386x_0x_2x_5-1395x_1x_2x_5-\\&606x_0x_3x_5-646x_1x_3x_5+1442x_2x_3x_5+644x_3^2x_5 -1407x_0x_4x_5-\\&255x_1x_4x_5+1394x_2x_4x_5-636x_3x_4x_5-8x_4^2x_5+129x_0x_5^2-1026x_1x_5^2-\\&1392x_3x_5^2+1392x_4x_5^2.
\end{aligned}
\end{equation*}
$X$ is a smooth irreducible subscheme of $\P^5$ of dimension 4 and degree 3. The surface $T$ (resp. $D$) cuts a smooth cubic curve (resp. smooth conic) out of $\Pi_1$ and $\Pi_2$. This implies directly that $X$ contains the two planes, and that $(T,D)\geq 12$, since the intersection of the plane cubic and conic gives 6 points. A quick Macaulay2 calculation shows that $T$ and $D$ indeed intersect transversally in 12 points and this in turn means that our example belongs to the discriminant 108 component (see Thm. \ref{C18/14'}).\\
The linear system $|2h - T|=\P^2$ contains also $\Pi_1$ and $\Pi_2$ in its base locus, since $T\cap \Pi_i$ is a cubic curve for $i=1,2$, hence the fibers have degree 4. Let $\tilde{X}$ be the blow-up of $X$ along $T$. The discriminant locus of the map  $\pi \colon \tilde{X} \to \P^2 $ given by $|2h - T|$ is a reducible curve of degree 12 with two irreducible components. The generic fiber is a del Pezzo surface of degree 4 (a smooth intersection of two quadrics).

\subsubsection{Cubic fourfold with a quintic del Pezzo fibration}

To construct $T$, we define three quadrics $Q'_1$, $Q'_2$ and $Q'_3$ as follows:
\begin{eqnarray*}
Q_1'&=&x_1^2+1065x_0x_2+175x_1x_2-70x_2^2+1044x_0x_3-1139x_1x_3+1091x_2x_3-\\&&1140x_3^2-1024x_0x_4+1571x_1x_4+1184x_2x_4+1443x_3x_4+12x_4^2-\\&&1027x_0x_5+802x_1x_5-1655x_2x_5-339x_3x_5+768x_4x_5+801x_5^2,\\
Q_2'&=&x_0x_1+840x_0x_2-1358x_1x_2+77x_2^2-37x_0x_3+653x_1x_3+726x_2x_3+\\&&653x_3^2-1360x_0x_4-618x_1x_4-759x_2x_4+1504x_3x_4+1364x_4^2-\\&&161x_0x_5+1577x_1x_5 +1014x_2x_5-1101x_3x_5-1415x_4x_5+1577x_5^2,\\
Q_3'&=&x_0^2 +1415x_0x_2-34x_1x_2+765x_2^2-615x_0x_3-1200x_1x_3+1008x_2x_3-\\&&1200x_3^2+1194x_0x_4-1191x_1x_4-222x_2x_4+466x_3x_4-767x_4^2-\\&&901x_0x_5-13x_1x_5-1058x_2x_5-1213x_3x_5-1314x_4x_5-13x_5^2.
\end{eqnarray*}
Each of these quadrics contains the two disjoint planes:
\begin{center}
  $\Pi_1=\{x_2-1188x_5=x_1-1188x_4=x_0-1188x_3=0\}$ and $
  O=\{x_2 + x_4= x_1 + x_3 + x_5= x_0 - x_4=0\}$
\end{center}
$T$ is obtained by saturating the ideal generated by $Q_1$, $Q_2$ and $Q_3$ with respect to the defining ideals of the planes $\Pi_1$ and $O$. While $\Pi_1$ is the same plane as in the preceding example (hence cutting out a cubic and a conic on $T$ and $D$), $O$ is a different plane, that still intersects $T$ along a smooth cubic but has empty intersection with $D$.
The surfaces $T$ and $D$ are contained in the cubic fourfold $X$ of equation
\begin{equation*}
\begin{aligned}
C'':=&\ x_0^2x_1-634x_0x_1^2-983x_0x_1x_2+634x_1^2x_2+982x_1x_2^2-769x_0x_1x_3-1007x_1^2x_3\\&+730x_0x_2x_3-208x_1x_2x_3-1249x_2^2x_3-528x_1x_3^2-911x_2x_3^2-572x_0^2x_4+\\&371x_0x_1x_4-133x_0x_2x_4+59x_1x_2x_4 -1255x_2^2x_4-913x_0x_3x_4+560x_1x_3x_4+\\&1154x_2x_3x_4+1092x_0x_4^2+77x_2x_4^2-731x_0^2x_5+137x_0x_1x_5-57x_1^2x_5-\\&1100x_0x_2x_5-878x_1x_2x_5-1079x_0x_3x_5-1578x_1x_3x_5+1172x_2x_3x_5+\\&1441x_3^2x_5+127x_0x_4x_5+966x_1x_4x_5+1009x_2x_4x_5+238x_3x_4x_5+1652x_4^2x_5\\&+1008x_0x_5^2+1175x_1x_5^2+1033x_3x_5^2-1033x_4x_5^2.
\end{aligned}
\end{equation*}
$X$ is a smooth irreducible subscheme of $\P^5$ of dimension 4 and degree 3 that contains $\Pi_1$ (by the same argument as the preceding example), $T$ and $D$. A Macaulay2 computation shows that $(D,T)=12$, and 6 of these points are contained in $\Pi_1$. The linear system $|2h - T|=\P^2$  contains also $\Pi_1$ in its base locus, since $T\cap \Pi_1$ is a cubic curve, hence the degree of the fibers of the induced map $X\to \P^2$ is five. The discriminant locus of the map  $\tilde{X} \to \P^2 $ is a reducible curve of degree 12 with two irreducible components: a singular curve of degree 6 and a smooth curve of degree 6.
The generic fiber $F$ is smooth, has degree 5, and it is not hard to show that $\mathcal{O}_F(1)$ is the anticanonical bundle. Hence the generic fiber is a del Pezzo quintic surface. \\

%\paragraph{\textbf{Data Availability}}Data sharing not applicable to this article as no datasets were generated or analysed during the current study.
%\bibliography{bib_tocho}
%\bibliographystyle{abbrv}

\bibliography{bib_tocho}
\bibliographystyle{abbrv}

\end{document}